\documentclass{article}

\usepackage{arxiv}

\usepackage[utf8]{inputenc} 
\usepackage[T1]{fontenc}    
\usepackage{hyperref}       
\usepackage{url}            
\usepackage{booktabs}       
\usepackage{amsfonts}       
\usepackage{nicefrac}       
\usepackage{microtype}      
\usepackage{lipsum}

\usepackage{amssymb}
\usepackage{amsthm}

\usepackage{color}
\usepackage{url}
\usepackage{algorithm}
\usepackage{algpseudocode}
\algnotext{EndFor}
\algnotext{EndIf}
\usepackage{tikz}
\usetikzlibrary{arrows}
\usetikzlibrary{positioning}
\usepackage{tikz-qtree}
\usepackage{graphicx}
\usepackage{enumerate}
\usepackage{multirow}

\usepackage{url}

\usepackage{tikz-qtree}

\usepackage{algorithm}
\usepackage{algpseudocode}
\usepackage{algorithmicx}

\usepackage{amsmath,amssymb}





\newcommand{\msrepr}{\ensuremath{\operatorname{m}}}
\newcommand{\mrepr}{\ensuremath{\operatorname{r}}}

\newcommand{\dimms}{\ensuremath{\operatorname{\dim_{ms}}}}

\newcommand{\multiplicity}[2]{\ensuremath{#1[#2]}}

\newcommand{\ecc}{\ensuremath{\epsilon}}





\newcommand{\weak}{outer}

\newcommand{\Weak}{Outer}
\newcommand{\weakly}{outer}





\newcommand{\orderlessMetricRepresentation}{multiset representation} %

\newcommand{\orderlessMetricRepresentationPl}{multiset representations} %

 %

\newcommand{\orderlessResolving}{multiset resolving set} %
\newcommand{\orderlessResolvingPl}{multiset resolving sets}

\newcommand{\orderlessMetricDimension}{\weak{} multiset dimension} %

\newcommand{\OrderlessMetricDimension}{\Weak{} Multiset Dimension} %

\newcommand{\orderlessBase}{\weak{} multiset basis} 
\newcommand{\orderlessBases}{\weak{} multiset bases} %

\newcommand{\msl}{\{\hspace*{-0.1cm}|}
\newcommand{\msr}{|\hspace*{-0.1cm}\}}

\newcommand{\multisetRel}{\cong_S}
\newcommand{\metricRel}{\sim_S}

\newcommand{\degree}{\ensuremath{\delta}}

\newcommand{\iso}{\ensuremath{\phi}}

\newcommand{\neighbour}{\ensuremath{\leftrightarrow}}

\newcommand{\nneighbour}{\ensuremath{\not \leftrightarrow}}

\newcommand{\decisionProblemName}{\textsc{DimMS}}

\newtheorem{theorem}{Theorem}[section]
\newtheorem{lemma}[theorem]{Lemma}
\newtheorem{corollary}[theorem]{Corollary}
\newtheorem{proposition}[theorem]{Proposition}

\newtheorem{definition}[theorem]{Definition}
\newtheorem{example}[theorem]{Example}

\newtheorem{remark}[theorem]{Remark}





\title{Distance-based vertex identification in graphs: the outer multiset
dimension}

\author{
   Reynaldo Gil-Pons \\
   Center for Pattern Recognition and Data Mining \\
   Patricio Lumumba s/n, Santiago de Cuba, 90500, Cuba \\
   \texttt{rey@cerpamid.co.cu} \\
   \And
   Yunior Ram\'irez-Cruz \\
   Interdisciplinary Centre for Security, Reliability and Trust \\ University 
   of Luxembourg \\
   6 av. de la Fonte, L-4364 Esch-sur-Alzette, Luxembourg \\
   \texttt{yunior.ramirez@uni.lu} \\
   \And
   Rolando Trujillo-Rasua \\
   School of Information Technology \\ Deakin University \\
   221 Burwood Hwy, Burwood, VIC 3125, Australia \\
   \texttt{rolando.trujillo@deakin.edu.au} \\
   \And
   Ismael G. Yero \\
   Departamento de Matem\'aticas, Escuela Polit\'ecnica Superior de Algeciras 
   \\ Universidad de C\'adiz\\
   Av. Ram\'on Puyol s/n, 11202 Algeciras, Spain\\
   \texttt{ismael.gonzalez@uca.es} \\
}

\begin{document}
\maketitle

\begin{abstract}
Given a graph $G$ and a subset of vertices $S = \{w_1, \ldots,
w_t\} \subseteq V(G)$, the \orderlessMetricRepresentation{}
of a vertex $u\in V(G)$ with respect to $S$ is the multiset
$m(u|S) = \msl d_G(u, w_1), \ldots, d_G(u, w_t) \msr$. A subset of
vertices $S$ such that $m(u|S) = m(v|S) \iff u = v$ for every $u, v \in V(G)
\setminus S$ is said to be a \orderlessResolving, and the cardinality of the
smallest such set is the \orderlessMetricDimension.
We study the general behaviour of the \orderlessMetricDimension,
and determine its exact value for several graph families.
We also show that computing the \orderlessMetricDimension{}
of arbitrary graphs is NP-hard, and provide methods for efficiently
handling particular cases.
\end{abstract}

\keywords{graph, resolvability, resolving set, \orderlessResolving, 
metric dimension, \orderlessMetricDimension
}

\section{Introduction}\label{sec-intro}

The characterisation of vertices in a graph by means of unique features, known as
\emph{distinguishability} or \emph{resolvability}, has found
applications in computer
networks where nodes ought to be localised based on their properties rather
than on identifiers~\cite{khuller1996landmarks}, or to determine
the social role of an
actor in society in comparison to other peers with similar
structural properties~\cite{PhysRevE.73.026120}. In fact, simple structural
properties of vertices, such as their degree or
the subgraphs induced by their neighbours, have been successfully used to
re-identify (supposedly) anonymous users in social
graphs~\cite{LT2008,ZP2008,ZCO2009}.

This article focuses on vertex characterisations that are defined in relation
to a subset of vertices of the graph. The earliest of such characterisations is
known as \emph{metric representation}, introduced independently by
Slater~\cite{Slater1975} in $1975$ and Harary and Melter~\cite{Harary1976} in
$1976$. Formally,
given an ordered set of vertices $S = \{w_1, \ldots,
w_t\} \subseteq V$ in a graph $G = (V, E)$, the metric representation
of a vertex $u\in V$ with respect to $S$ is the $t$-vector $\mrepr(u|S) =
(d_G(u,
w_1), \ldots, d_G(u, w_t))$, where the metric $d_G(u, v)$
is computed as the length of a
shortest $u-v$ path in $G$. An ordered subset $S$ satisfying that every two
distinct vertices $u$ and $v$ in the graph have different metric
representation, i.e. $\mrepr(u|S) \neq \mrepr(v|S)$, is said to be a
\emph{resolving
set}. The minimum cardinality amongst the resolving sets in a graph $G$
is known as the \emph{metric dimension} of $G$, and denoted as~$\dim(G)$.
The metric dimension of graphs has been extensively studied in literature since
the 70s. Issues that are relevant to the present day, such as privacy in
online social networks, are still benefiting from such research
effort~\cite{MauwRamirezTrujillo2016,MRT2018,MauwTX16,Trujillo-Rasua2016}.

The assumption that resolvability requires an order to exist (or be imposed)
on a set $S$ for obtaining metric representations
remained unchallenged
until $2017$, when Simanjuntak, Vetr\'{i}k, and Mulia introduced the notion of
\emph{multiset representation}~\cite{2017arXiv171100225S}  by looking at the
multiset of distances rather than at the standard vector of distances.

For a vertex $u\in V$ and a vertex set $S\subseteq V$, the multiset
representation of $u$ with respect to $S$, denoted $\msrepr(u|S)$,
is defined by $$\msrepr(u|S)=\msl d_G(u, w_1), \ldots, d_G(u, w_t) \msr,$$
where $\msl . \msr$ denotes a multiset.

Using this definition, the notions
of resolvability in terms of the metric representation were straightforwardly
extended to consider resolvability in terms of the multiset
representation \cite{KhemmaniI18,2017arXiv171100225S}.
Our main observation in this article is that these straightforward extensions
are in fact an oversimplification of the problem of distinguishing vertices
in a graph based on the multiset representation.
We argue that
this problem
has two flavours, one of which has been neglected in literature.

\noindent \emph{Contributions.}
This article makes the following contributions. 

\begin{itemize}
	\item We generalise the metric dimension of graphs to accommodate
different characterisations of their vertices, such as the metric and multiset
representations (Section~\ref{sec-preliminaries}). We show that the metric
dimension problem with respect to the
multiset representation admits two interpretations: one that can be found
in the literature~\cite{KhemmaniI18,2017arXiv171100225S} and is known as the
\emph{multiset dimension}, and another one that we call
the \emph{\orderlessMetricDimension}. The latter is well-defined, 
whereas the multiset dimension~\cite{KhemmaniI18,2017arXiv171100225S}
is undefined for an infinite number of graphs. We also show that 
the \orderlessMetricDimension{}
finds applications on measuring the re-identification risk of users
in a social graph. To the best of our knowledge, the multiset dimension has no 
obvious practical application.
	\item We characterise several graph families
for which the \orderlessMetricDimension{} can be easily determined,
or bounded by the metric dimension (Section~\ref{sec-basic-results}).
	\item We prove that the problem of computing the \orderlessMetricDimension{}
	in a graph is NP-Hard (Section~\ref{sec-complexity}).
	\item We provide a polynomial computational procedure to calculate the
	\orderlessMetricDimension{} of full $2$-ary trees   
	(Section~\ref{sect-trees}), and a parallelisable 
	algorithm for the general case of full $\degree$-ary trees.
\end{itemize}



\section{A generalisation of the metric dimension}
\label{sec-preliminaries}

We consider a simple and connected graph $G=(V,E)$ where $V$
is a set of vertices and $E$ a set of edges. The distance
$d_{G}(v,u)$ between two vertices $v$ and $u$ in $G$ is the
number of edges in a shortest path connecting them. If there is no ambiguity,
we will simply write $d(v,u)$.

The metric dimension of graphs has traditionally been studied
based on the so-called metric representation, which is the vector of distances
from a vertex
to an ordered subset of vertices of the graph. To accommodate other types
of relations between vertices, we generalise the metric dimension by
considering any equivalence relation $\sim\, \subseteq V \times V$ over the set of
vertices of the graph. That is, we consider a relation $\sim$ that is
reflexive, symmetric, and transitive. We use $[u]_{\sim}$ to denote
the equivalence class of the vertex $u \in V$ with respect to the relation $\sim$,
while $V / \sim$ denotes the partition of $V$ composed of the equivalence
classes induced by $\sim$.

\begin{definition}[Resolving and \weakly{} resolving set]\label{def-resolving}
A subset $S$ of vertices in a graph $G = (V, E)$ is said to be
resolving (resp. \weakly{} resolving) with respect to $\sim$ if all
equivalence classes in
$V/\sim$ (resp. $(V-S)/\sim$ ) have cardinality one.
\end{definition}

While standard resolving sets distinguish all vertices in a graph,
outer resolving sets only look at those vertices that are not in $S$, hence the
name.
We remark that there exist applications working under the assumption
that $S$ is given, implying that vertices in $S$ do not
need to be distinguishable. For example, in an active re-identification attack
on a social
graph~\cite{BDK2007,Trujillo-Rasua2016}, attackers first retrieve a set of
attacker nodes by using a pattern recognition algorithm, then they re-identify
other users in the network based on their metric representations
with respect to the set of attacker nodes.

We use $\metricRel$ to denote the relation on the set of vertices of a graph
defined by $u \metricRel v \iff \mrepr(u |
S) = \mrepr(v | S)$, where $\mrepr(v | S)$ is the vector of distances from
$v$ to vertices in $S$, and $\multisetRel$ to denote the relation $u
\multisetRel v \iff \msrepr(u |
S) = \msrepr(v | S)$, where $\msrepr(v | S)$ is the multiset of distances from
$v$ to vertices in $S$. These two relations are interconnected in the following
way.

\begin{proposition}\label{prop-relations}
For every non-trivial graph $G$, the following facts hold:
\begin{enumerate}[i.]
\item Every resolving set of $G$ with respect to $\multisetRel$ is
an \weak{} resolving set.
\item Every \weak{} resolving set of $G$ with respect to $\multisetRel$ is
an \weak{} resolving set of~$G$ with respect to $\metricRel$.
\item Every \weak{} resolving set of~$G$ with respect to $\metricRel$ is
a resolving set of~$G$, and vice versa.
\end{enumerate}
\end{proposition}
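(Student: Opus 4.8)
The plan is to walk down the chain of implications in the stated order, from the finest notion (a resolving set with respect to $\multisetRel$) to the coarsest (a resolving set with respect to $\metricRel$, i.e.\ a classical resolving set of $G$). For item~i I would use the elementary fact that restricting an equivalence relation $\sim$ to a subset $V-S$ never enlarges its classes: every class of $\sim$ on $V-S$ has the form $C\cap(V-S)$ for some $C\in V/\sim$. Hence if each class of $V/\multisetRel$ is a singleton, then so is each class of $(V-S)/\multisetRel$, and $S$ is an \weakly{} resolving set with respect to $\multisetRel$. Applied to $\metricRel$, the very same argument also gives the easy (``vice versa'') direction of item~iii.

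For item~ii, the key step is that $\metricRel$ refines $\multisetRel$: if $\mrepr(u|S)=\mrepr(v|S)$ as vectors, then $\msrepr(u|S)=\msrepr(v|S)$ as multisets, since a vector determines the multiset of its entries; thus $u\metricRel v$ implies $u\multisetRel v$. Consequently each $\metricRel$-class is contained in a $\multisetRel$-class, and this containment is preserved after restricting both relations to $V-S$. So if all classes of $(V-S)/\multisetRel$ are singletons, then so are all classes of $(V-S)/\metricRel$, which is exactly what item~ii asserts.

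For item~iii the forward direction carries the actual content. The crucial observation is that each $w\in S$ is automatically distinguished from every other vertex by $\mrepr(\cdot|S)$: in the coordinate indexed by $w$, the vertex $w$ has entry $d(w,w)=0$, whereas any vertex $v\neq w$---be it $v\in S\setminus\{w\}$ or $v\in V\setminus S$---has entry $d(v,w)\ge 1$. Hence $\mrepr(w|S)\neq\mrepr(v|S)$ for all $v\neq w$, so the $\metricRel$-class of $w$ is $\{w\}$. Therefore, if $S$ is an \weakly{} resolving set with respect to $\metricRel$, so that the vertices of $V-S$ already have pairwise distinct metric representations, then all vertices of $V$ have pairwise distinct metric representations; that is, $S$ is a resolving set of $G$.

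I do not anticipate a genuine obstacle: each item reduces to a short structural observation, and the hypothesis that $G$ is connected and non-trivial is used only to keep the statements meaningful. The single place that needs more than pure logic is the forward direction of item~iii, where one must exploit the zero coordinate to see that the members of $S$ are resolved ``for free'' and hence need never be considered; I would isolate that remark at the start of the proof of item~iii and deduce everything else from it.
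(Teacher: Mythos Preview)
Your proposal is correct and follows essentially the same route as the paper: item~(i) by restriction from $V$ to $V\setminus S$, item~(ii) via the implication that equal vectors yield equal multisets (equivalently, that $\metricRel$ refines $\multisetRel$), and item~(iii) by the zero-coordinate observation. The only difference is that the paper dismisses item~(iii) as ``a well-known property of resolving sets based on the metric representation,'' whereas you actually spell out the argument; your version is therefore more self-contained but not methodologically distinct.
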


\begin{proof}
Let $S\subseteq V(G)$ be a resolving set of $G$ with respect to $\multisetRel$.
Then, every pair of distinct vertices $u,v\in V(G)$
satisfy $\msrepr(u | S)\ne \msrepr(v | S)$. Thus, it trivially follows
that the same property holds for every pair of distinct vertices
$u,v\in V(G)\setminus S$. This completes the proof of (i).

The second property follows straightforwardly from the fact that
$\msrepr(u | S)\ne \msrepr(v | S) \Longrightarrow
\mrepr(u | S)\ne \mrepr(v | S)$, and (iii) is a well-known property of
resolving sets based on the metric representation. 
\end{proof}

Figure~\ref{fig-hierarchy} depicts the relations between resolvability notions
enunciated in Proposition~\ref{prop-relations} in the form of a hierarchy.
In the figure, every arrow from resolvability notion $A$
to resolvability notion $B$ indicates that a set $S$ which is resolving
as defined by $A$ is also resolving as defined by $B$. We use the following
shorthand notation in
Figure~\ref{fig-hierarchy} and in the remainder of this article.

\begin{itemize}
	\item \emph{resolving set} to denote a resolving set with
	respect to $\metricRel$.
	\item \emph{\orderlessResolving} to denote a  resolving set with
	respect to $\multisetRel$.
	\item \emph{\weakly{} resolving set} to denote an \weakly{}
	resolving set with
	respect to $\metricRel$.
	\item \emph{\weakly{} \orderlessResolving} to denote an \weakly{}
	resolving 	set with 	respect to $\multisetRel$.
\end{itemize}

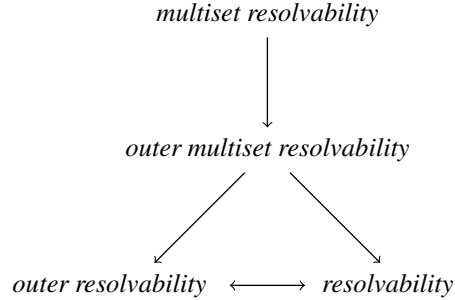
\begin{figure}[!ht]

\begin{center}

\begin{tikzpicture}[inner sep=0.7mm,
transition/.style={rectangle,draw=black!50,fill=black!20,thick},
line width=.5pt]

\coordinate (SMRS) at (0,1.5);
\coordinate (WMRN) at (0,0.3);
\coordinate (WMRSE) at (-0.3,-0.3);
\coordinate (WMRSW) at (0.3,-0.3);
\coordinate (WRN) at (-1.5,-1.5);
\coordinate (SRN) at (1.5,-1.5);
\coordinate (WRE) at (-0.5,-1.8);
\coordinate (SRW) at (0.5,-1.8);

\draw[black,->] (SMRS) -- (WMRN);
\draw[black,->] (WMRSE) -- (WRN);
\draw[black,->] (WMRSW) -- (SRN);
\draw[black,<->] (WRE) -- (SRW);

\coordinate [label=center:{\emph{multiset resolvability}}] (SMR) at (0,1.8);
\coordinate [label=center:{\emph{outer multiset resolvability}}] (WMR) at (0,0);
\coordinate [label=center:{\emph{outer resolvability}}] (WR) at (-2.1,-1.8);
\coordinate [label=center:{\emph{resolvability}}] (SR) at (1.6,-1.8);

\end{tikzpicture}

\end{center}

\caption{Hierarchy of resolvability notions.}\label{fig-hierarchy}
\end{figure}

\begin{definition}[Metric dimension and outer metric
dimension\label{def-dimension}]
The metric dimension (resp. outer metric dimension) of a simple
connected graph $G = (V, E)$ with respect to a structural relation $\sim$ is
the minimum cardinality amongst a resolving (resp. outer resolving) set in
$G$ with respect to $\sim$. If no resolving (resp. outer resolving) set
exists, we say that the metric dimension (resp. outer metric dimension)
is
undefined.
\end{definition}

An example of a metric dimension definition that is undefined for some
graphs is given by Simanjuntak et al.~\cite{2017arXiv171100225S}. They use the
multiset representation to distinguish vertices. It is easy to prove that
a complete graph has no multiset
resolving set, which leads to indefinition.
Conversely, the outer metric dimension with respect to the multiset
representation is always defined, given that for every graph $G = (E, V)$, $V$
is an outer multiset resolving set.

Overall, we highlight the fact that, while the outer metric dimension
and the standard metric dimension with respect to the metric representation
are equivalent (see Figure~\ref{fig-hierarchy}),
the use of the multiset
representation renders the outer metric dimension different from the standard
metric dimension. In fact, the outer multiset dimension is defined for any graph,
whereas the multiset dimension is not. Furthermore, recent privacy attacks
and countermeasures on social networks~\cite{BDK2007,Peng2012,Trujillo-Rasua2016}
rely on the notion of \emph{outer} resolving set,
rather than on the original notion of resolving set.
The remainder of this article is thus dedicated to the study
of the \emph{\orderlessMetricDimension},
that is, the \weakly{} metric dimension with respect to $\multisetRel$.



\section{Basic results on the \orderlessMetricDimension{}}
\label{sec-basic-results}

In this section we characterise several graph families
for which the \orderlessMetricDimension{} can be easily determined, or bounded
by the metric dimension otherwise. We start by providing notation that we use
throughout the paper.

\noindent \emph{Notation.}
Let $G=(V,E)$ be a graph of order $n=|V(G)|$. We will say that $G$
is \emph{non-trivial} if $n \ge 2$. $K_n$, $N_n$, $P_n$ and $C_n$ stand for the
complete, empty, path and cycle graphs, respectively, of order $n$.
Moreover, we will use the notation
$u \neighbour_{G} v$ (negated as $u \nneighbour_{G} v$) to indicate that $u$
and
$v$
are \emph{adjacent} in $G$, that is $(u,v)\in E$.
For a vertex $v$ of $G$, $N_G(v)$ denotes the set of neighbours of $v$ in $G$,
that is $N_G(v)=\{u\in V(G):\; u \neighbour v\}$. The set $N_G(v)$
is called the \emph{open neighbourhood} of the vertex $v$ in $G$
and $N_G[v]=N_G(v)\cup \{v\}$ is called the \emph{closed neighbourhood}
of $v$ in $G$. The degree of a vertex $v$ of $G$ will be denoted by
$\delta_G(v)$.
If there is no ambiguity, we will drop the subscripts and simply write
$u \neighbour v$, $u \nneighbour v$, $N(v)$, etc.
Two different vertices $u,v$ are called \emph{true twins} if $N[u] = N[v]$.
Likewise, $u,v$ are called \emph{false twins}
if $N(u) = N(v)$. In general, $u,v$ are called \emph{twins}
if they are either true twins or false twins. Moreover, a vertex $u$
is called a \emph{twin} if there exists $v\ne u$ such that $u$ and $v$
are twins. Note that the property of being twins induces an equivalence
relation on the vertex set of any graph. Finally, we will use the notation
$\dimms(G)$ for the
\orderlessMetricDimension{} of a graph $G$, and  $\dim(G)$ for the standard
metric dimension.



\begin{remark}\label{rm-general-bounds-msdim}
For every non-trivial graph $G$ of order $n$, the following facts hold:
\begin{enumerate}[i.]
\item $1\le\dimms(G)\le n-1$.
\item $\dimms(G)\ge\dim(G)$.
\end{enumerate}
\end{remark}

\begin{proof}
The fact that $\dimms(G)\ge 1$ follows directly from the definition
of \orderlessMetricDimension, whereas $\dimms(G)\le n-1$ follows trivially
from the fact that every vertex $v$ is the sole vertex
in $V(G)\setminus\left(V(G)\setminus\{v\}\right)$,
and thus it has a unique multiset representation w.r.t. $V(G)\setminus\{v\}$,
which is thus a \orderlessResolving.
The fact that $\dimms(G)\ge\dim(G)$ follows directly
from item (ii) of Proposition~\ref{prop-relations}.
\qed
\end{proof}

Once established the global bounds of the \orderlessMetricDimension{},
we now focus on the extreme cases of these inequalities.

\begin{remark}\label{rm-paths}
A graph $G$ satisfies $\dimms(G)=1$ if and only if  it is a path graph.
\end{remark}

\begin{proof}
Let $G$ be a path graph. It is clear that the set $\{ v\}$,
where $v$ is an extreme vertex of $G$,
is a \orderlessResolving{} of $G$, so $\dimms(G)\le 1$.
By item (ii) of Remark~\ref{rm-general-bounds-msdim}, $\dimms(G)\ge\dim(G)\ge 1$,
so the equality holds.
On the other hand, if $G$ is not a path graph, then item (ii)
of Remark~\ref{rm-general-bounds-msdim} also leads
to $\dimms(G)\ge\dim(G)\ge 2$, as the standard metric dimension of a graph
is known to be $1$ if and only if it is a path graph \cite{Chartrand2000}.
\end{proof}

According to Remark~\ref{rm-paths}, the cases where $\dimms(G)=\dim(G)=1$ coincide.
However, this is not the case for the upper bound
of Remark~\ref{rm-general-bounds-msdim} (i).
Indeed, while it is easy to see that, for any positive integer $n$,
the complete graph $K_n$ satisfies $\dimms(K_n)=\dim(K_n)=n-1$,
we have the fact that this is the sole family of graphs for which $\dim(K_n)=n-1$,
whereas there exist graphs $G$ such that $\dimms(G)=n-1>\dim(G)$,
as exemplified by the next results.

\begin{example}\label{ex-1}
The cycle graphs $C_4$ and $C_5$ satisfy $\dimms(C_4)=3>2=\dim(C_4)$
and $\dimms(C_5)=4>2=\dim(C_5)$.
\end{example}

\begin{remark}\label{rm-compl-k-partite}
Every complete $k$-partite graph
$G\cong K_{r_1,r_2,\ldots,r_k}$ such that
$r_1=r_2=\ldots=r_k\ge 2$ and $\sum_{i=1}^kr_i=n$ satisfies $\dimms(G)=n-1$.
\end{remark}

\begin{proof}
Let $G\cong K_{r_1,r_2,\ldots,r_k}$ be a complete $k$-partite
graph such that $r_1=r_2=\ldots=r_k\ge 2$. Let
$u,v\in V(G)$ be two arbitrary vertices of $G$ and let
$S\subseteq V(G)\setminus\{u,v\}$.
If $u \nneighbour v$, then $\msrepr(u\ |\ S)=\msrepr(v\ |\ S)$,
as they are false twins in~$G$. Consequently, $S$ is not
a \orderlessResolving{} of $G$. We now treat the case where $u \neighbour v$,
for which we differentiate the following subcases:

\begin{itemize}
\item $S=V(G)\setminus\{u,v\}$. In this case, $\msrepr(u\ |\ S)=\msrepr(v\ |\ S)=
\bigcup_{i=1}^{r-1}\msl 2 \msr \cup \bigcup_{i=1}^{r-1}\msl 1 \msr
\cup \bigcup_{i=1}^{k-2}\bigcup_{j=1}^{r}\msl 1 \msr$, and
so $S$ is not a \orderlessResolving{} of~$G$.
\item $S\subset V(G)\setminus\{u,v\}$. Here, if there exists some
$x\in V(G)\setminus(S\cup\{u,v\})$ such that $x \nneighbour u$ ($x \nneighbour
v$), then $\msrepr(u\ |\ S)=\msrepr(x\ |\ S)$
$\left(\msrepr(v\ |\ S)=\msrepr(x\ |\ S)\right)$,
as $x$ and $u$ ($x$ and $v$) are false twins in $G$. Thus, $S$ is not a \orderlessResolving{} of~$G$.
Finally, if every $x\in V(G)\setminus(S\cup\{u,v\})$ satisfies $u\neighbour
x\neighbour v$, then
we have that $\msrepr(u\ |\ S)=\msrepr(v\ |\ S)=
\bigcup_{i=1}^{r-1}\msl 2 \msr \cup \bigcup_{i=1}^{r-1}\msl 1 \msr
\cup \bigcup_{i=1}^{t_1}\msl 1 \msr \cup \ldots
\cup \bigcup_{i=1}^{t_{k-2}}\msl 1 \msr$,
with $t_i\le r$ for $i\in\{1,\ldots,k-2\}$, which entails
that $S$ is not a \orderlessResolving{} of~$G$.
\end{itemize}

Summing up the cases above, we have that no set
$S\subseteq V(G)$ such that $|S|\le n-2$
is a \orderlessResolving{} of $G$, and so $\dimms(G)\ge n-1$.
The equality follows from item (i) of Remark~\ref{rm-general-bounds-msdim}.
The proof is thus completed.
\end{proof}

%

Example~\ref{ex-1} shows two cases where the \orderlessMetricDimension{}
of a cycle graph is strictly larger than its standard metric dimension.
With the exception of $C_3$, which satisfies $\dimms(C_3)=\dim(C_3)=2$,
the strict inequality holds for every other cycle
graph, as shown by the following result.

\begin{remark}\label{rm-cycles}
Every cycle graph $C_n$ of order $n\ge 6$ satisfies $\dimms(C_n)=3$.
\end{remark}

\begin{proof}
Consider an arbitrary pair of vertices $u,v\in V(C_n)$ and a pair
of vertices $x,y\in V(C_n)\setminus\{u,v\}$ such that $ux\ldots yv$
is a path of $C_n$ (note that for $n\ge 6$ at least one such pair $x,y$ exists).
We have that $\msrepr(x\ |\ \{u,v\})=\msrepr(y\ |\ \{u,v\})
=\msl 1,d(u,v)\pm 1\msr$, so no vertex subset of
size 2 is a \orderlessResolving{} of $C_n$. Thus, $\dimms(C_n)\ge 3$.

Now, consider an arbitrary vertex $v_i\in V(C_n)$ and the set
$S=\{v_{i-2},v_i,v_{i+1}\}$, where the subscripts are taken modulo $n$.
We differentiate the following cases for a pair of vertices $x,y\in V(C_n)\setminus S$:
\begin{enumerate}
\item $x=v_{i-1}$. In this case, $\msrepr(x\ |\ S)=\msl 1,1,2\msr\neq\msrepr(y\ |\ S)$,
as $y$ is at distance 1 from at most one element in $S$.
\item $x$ and $y$ satisfy $\{d(x,v_i),d(x,v_{i-2})\}=\{d(y,v_i),d(y,v_{i-2})\}$.
In this case, assuming without loss of generality that $a=d(x,v_i)<d(y,v_i)$,
we have that $\msrepr(x\ |\ S)=\msl a, a+2,a-1\msr\neq\msl a,a+2,a+3\msr=\msrepr(y\ |\ S)$.
\item $x$ and $y$ satisfy $\{d(x,v_{i+1}),d(x,v_{i-2})\}=\{d(y,v_{i+1}),d(y,v_{i-2})\}$.
In a manner analogous to that of the previous case, we assume without loss of generality that $b=d(x,v_{i+1})<d(y,v_{i+1})$ and
obtain that $\msrepr(x\ |\ S)=\msl b,b+1,b+3\msr\neq\msl b,b+2,b+3\msr=\msrepr(y\ |\ S)$.
\item In every other case,
we have that $\min\{d\;|\;d\in\msrepr(x\ |\ S)\}\neq \min\{d'\;|\;d'\in\msrepr(y\ |\ S)$,
so $\msrepr(x\ |\ S)\neq \msrepr(y\ |\ S)$.
\end{enumerate}
Finally, summing up the cases above,
we have that $S$ is a \orderlessResolving{} of $G$, and so $\dimms(G)\le|S|=3$.
This completes the proof.
\end{proof}

%
%
%
%

Next, we characterise a large number of cases 
where
the outer multiset dimension is strictly greater than
the standard metric dimension. To that end, we first introduce some necessary
notation. We represent by $^{n}_{r}C_{rep}$ the number of $r$-combinations,
with repetition, from $n$ elements. Likewise, we represent
by $^{n}_{r}P_{rep}$ the number of $r$-permutations, with repetition,
from $n$ elements. Recall that $^{n}_{r}C_{rep}=\binom{r+n-1}{r}
=\binom{r+n-1}{n-1}$, whereas $^{n}_{r}P_{rep}=n^r$.
Finally, we recall the quantity $f(n,d)$, defined in~\cite{Chartrand2000}
as the smallest positive integer $k$ such that $k+d^k\ge n$. In an analogous
manner, we define $f'(n,d)$ as the smallest positive integer $k'$
such that $k'+\binom{r+d-1}{d-1}\ge n$. Since, by definition,
$^{n}_{r}C_{rep}\le ^{n}_{r}P_{rep}$, we have that $f(n,d)\le f'(n,d)$.
With the previous definitions in mind, we introduce our next result.

\begin{theorem}\label{th-dimms-strictly-greater}
For every graph $G=(V,E)$ of order $n$ and diameter $d$
such that $\dim(G)<f'(n,d)$,
$$\dimms(G)>\dim(G).$$
\end{theorem}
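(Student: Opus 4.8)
The plan is to derive the lower bound $\dimms(G) \ge f'(n,d)$ by a direct counting (pigeonhole) argument, after which the hypothesis $\dim(G) < f'(n,d)$ immediately forces the conclusion. Morally this is the \orderlessMetricDimension{} counterpart of the classical bound $\dim(G) \ge f(n,d)$ attached to $f(n,d)$ in~\cite{Chartrand2000}: the only structural difference is that a vertex lying outside a \orderlessResolving{} receives a \emph{multiset} of distances rather than a vector of distances, so the quantity bounding how many representations are available is $^{d}_{k}C_{rep} = \binom{k+d-1}{d-1}$ (the number of $k$-element multisets over a $d$-element ground set) rather than $^{d}_{k}P_{rep} = d^k$, which is exactly why $f'$ rather than $f$ governs the estimate.

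First I would fix a minimum \orderlessResolving{} $S$ of $G$ and set $k = |S| = \dimms(G)$. Since $S$ is \weakly{} resolving with respect to $\multisetRel$, the $n-k$ vertices of $V(G) \setminus S$ carry pairwise distinct \orderlessMetricRepresentationPl{}. Next I would note that for every $u \in V(G)\setminus S$ the \orderlessMetricRepresentation{} $\msrepr(u\ |\ S)$ is a multiset of exactly $k$ entries, each of which is an integer in $\{1,\ldots,d\}$: at least $1$ because $u \neq w$ for every $w \in S$, and at most $d$ because $d$ is the diameter of $G$. Consequently, the number of \orderlessMetricRepresentationPl{} available to the vertices of $V(G) \setminus S$ is at most $^{d}_{k}C_{rep} = \binom{k+d-1}{d-1}$, the number of $k$-element multisets drawn from a ground set of size $d$.

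Then I would convert this into an inequality on $k$. Because the $n-k$ representations above are pairwise distinct, $n - k \le \binom{k+d-1}{d-1}$, that is, $k + \binom{k+d-1}{d-1} \ge n$. Recalling that $k = \dimms(G) \ge 1$ by item~(i) of Remark~\ref{rm-general-bounds-msdim}, and that $f'(n,d)$ is by definition the smallest positive integer satisfying this inequality, we get $\dimms(G) = k \ge f'(n,d)$. Combining this with the hypothesis gives $\dim(G) < f'(n,d) \le \dimms(G)$, hence $\dimms(G) > \dim(G)$, as required.

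I do not anticipate any genuine obstacle here, since the heart of the proof is a single counting step. The two points that merely need to be stated carefully are that the distances from a vertex outside $S$ range over exactly the $d$ values $1,\ldots,d$ (so the right binomial coefficient, and not $d^k$, shows up), and that ``smallest positive integer'' in the definition of $f'(n,d)$ is well posed, which is the case because $k \mapsto k + \binom{k+d-1}{d-1}$ is strictly increasing and tends to infinity.
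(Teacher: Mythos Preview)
Your proposal is correct and follows essentially the same approach as the paper: both arguments establish the lower bound $\dimms(G)\ge f'(n,d)$ via a pigeonhole count on the number of $k$-element multisets over $\{1,\ldots,d\}$, and then combine this with the hypothesis $\dim(G)<f'(n,d)$ to conclude. Your write-up is in fact slightly more explicit than the paper's, as you justify why the distance values lie in $\{1,\ldots,d\}$ and why $f'(n,d)$ is well defined.
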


\begin{proof}
Let $G=(V,E)$ be a graph of order $n$ and diameter $d$.
It was proven in~\cite{Chartrand2000} that every such graph
satisfies $\dim(G)\ge f(n,d)$. Indeed, no vertex subset
$S\subseteq V$ such that $|S|<f(n,d)$ is a metric generator of $G$,
because the number of different metric representations,
with respect to $S$, for elements in $V\setminus S$
is at most $d^{|S|}<n-|S|=|V\setminus S|$.
In general, if $|S|=r$, the set of all possible different metric representations
for elements of $V\setminus S$ with respect
to $S$ is that of all permutations,
with repetition, of $r$ elements from $\{1,2,\ldots,d\}$.
Applying an analogous reasoning, we have that the set of all possible
different multiset metric representations for elements of $V\setminus S$
with respect to $S$
is that of all combinations, with repetition, of $r$ elements
from $\{1,2,\ldots,d\}$. Thus, any multiset metric generator $S$
of $G$ must satisfy $^{n}_{|S|}C_{rep}\ge n-|S|$, so $\dimms(G)\ge f'(n,d)$.
In consequence, if $\dim(G)<f'(n,d)$, then $\dimms(G)>\dim(G)$.
\end{proof}

An example of the previous result is the wheel graph
$W_{1,5}\cong \langle v\rangle+C_5$, which has diameter $2$
(see Figure~\ref{fig-wheel}). As discussed in~\cite{Chartrand2000,Hernando2005},
$\dim(W_{1,5})=2=f(6,2)<f'(6,2)=3<4=\dimms(W_{1,5})$.

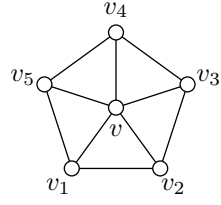
\begin{figure}[!ht]
\begin{center}

\begin{tikzpicture}[inner sep=0.7mm, place/.style={circle,draw=black,
fill=white},xx/.style={circle,draw=black!99,
fill=black!99},gray1/.style={circle,draw=black!99,
fill=black!25},gray2/.style={circle,draw=black!99,
fill=black!50},gray3/.style={circle,draw=black!99,fill=black!75},
transition/.style={rectangle,draw=black!50,fill=black!20,thick},
line width=.5pt]

\def\radius{1cm}
\def\lblradius{1.3cm}

\def\n{5}

\foreach \ind in {1,...,5}\pgfmathparse{90+360/\n*\ind}\coordinate
(g1\ind) at (\pgfmathresult:\radius);

\foreach \ind in {1,...,5}\pgfmathparse{90+360/\n*\ind}\coordinate
(v1\ind) at (\pgfmathresult:\lblradius);

\coordinate (v) at (0,0);

\draw[black] (g11) -- (g12) -- (g13) -- (g14) -- (g15) -- (g11);
\foreach \ind in {1,...,5} \draw[black] (v) -- (g1\ind);

\foreach \ind in {1,...,5} \node [place] at (g1\ind) {};

\node [place] at (v) {};

\node at (0,-0.3) {$v$};
\node at (v11) {$v_5$};
\node at (v12) {$v_1$};
\node at (v13) {$v_2$};
\node at (v14) {$v_3$};
\node at (v15) {$v_4$};

\end{tikzpicture}

\end{center}
\caption{The wheel graph $W_{1,5}\cong\langle v\rangle+C_5$.}
\label{fig-wheel}
\end{figure}

To conclude this section, we give a general result on the relation
between \weakly{} \orderlessResolvingPl{} and twin vertices,
a particular case of which will be useful in further sections of this paper.

\begin{proposition}\label{prop-twins}
Let $G$ be a non-trivial graph and let $S \subseteq V(G)$
be an \weakly{} \orderlessResolving{} of $G$.
Let $u,v\in V(G)$ be a pair of twin vertices.
Then, $u \in S$ or $v \in S$.
\end{proposition}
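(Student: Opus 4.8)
The plan is a short argument by contradiction. Suppose that neither $u$ nor $v$ lies in $S$; I will show that this forces $\msrepr(u\mid S)=\msrepr(v\mid S)$, which contradicts the hypothesis that $S$ is an \weakly{} \orderlessResolving{}. Indeed, by Definition~\ref{def-resolving} applied to the relation $\multisetRel$, every class of $(V-S)/\multisetRel$ is a singleton, so the two distinct vertices $u,v\in V(G)\setminus S$ cannot share a multiset representation.

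The one substantive ingredient is the elementary observation that twin vertices are equidistant from every other vertex, that is, $d_G(u,w)=d_G(v,w)$ for all $w\in V(G)\setminus\{u,v\}$, which I would isolate as a preliminary claim. To prove it, fix such a $w$, take a shortest $u$--$w$ path, and note that its second vertex cannot be $v$: for false twins this is because $u$ and $v$ are non-adjacent (if $u\neighbour v$ then $u\in N(v)=N(u)$, which is impossible), and for true twins because a shortest $u$--$w$ path already passing through $v$ would witness $d_G(v,w)\le d_G(u,w)-1$. In each case the second vertex of the path is a common neighbour of $u$ and $v$ (using $N(u)=N(v)$, respectively $N[u]=N[v]$), so rerouting the path through $v$ yields a $v$--$w$ walk of the same length; hence $d_G(v,w)\le d_G(u,w)$, and the reverse inequality follows by symmetry.

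Granting the claim, the conclusion is immediate: if $u\notin S$ and $v\notin S$, then $S\subseteq V(G)\setminus\{u,v\}$, so $d_G(u,w)=d_G(v,w)$ for every $w\in S$. Therefore the two distance vectors agree, $\mrepr(u\mid S)=\mrepr(v\mid S)$, and in particular $\msrepr(u\mid S)=\msrepr(v\mid S)$, which is the desired contradiction. Hence $u\in S$ or $v\in S$.

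I expect the only real obstacle to be the equidistance claim, and within it the small case analysis guaranteeing that a chosen shortest path can be rerouted through the other twin; once that is in hand, the rest is a direct unfolding of the definitions of the multiset representation and of \weakly{} \orderlessResolving{}.
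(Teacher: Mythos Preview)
Your proposal is correct and follows the same approach as the paper: both argue that twins are equidistant from every third vertex, hence share the same multiset representation with respect to any $S\subseteq V(G)\setminus\{u,v\}$, contradicting outer multiset resolvability. The paper simply asserts the equidistance fact as well known, whereas you supply a proof; one small wrinkle is that in the true-twin case your claim ``the second vertex cannot be $v$'' is not actually justified by the inequality $d_G(v,w)\le d_G(u,w)-1$ alone, but this is harmless since that very inequality already gives $d_G(v,w)\le d_G(u,w)$, which is all you need.
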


\begin{proof}
The proof follows from the fact that, as twin vertices, $u$ and $v$ satisfy
$d(u,x)=d(v,x)$ for every $x\in V(G)\setminus\{u,v\}$, which entails
that $u$ and $v$ have the same \orderlessMetricRepresentation{} according
to any subset of $V(G)\setminus\{u,v\}$.
\end{proof}

\begin{corollary}\label{cor-twin-equiv-classes}
Let $G$ be a non-trivial graph
and let $\mathcal{T}=\{\left[u_1\right], \left[u_2\right], \ldots,
\left[u_t\right]\}$ be the set of equivalence classes induced in $V(G)$
by the twin equivalence relation.
Then, $$\dimms(G)\ge
\sum_{i=1}^{t}\left(\left|\left[u_i\right]\right|-1\right).$$
\end{corollary}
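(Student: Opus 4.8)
The plan is to derive the bound by combining Proposition~\ref{prop-twins} with a counting argument over the twin equivalence classes. Let $S$ be any \weakly{} \orderlessResolving{} of $G$; it suffices to show $|S| \ge \sum_{i=1}^{t}\left(\left|\left[u_i\right]\right|-1\right)$, since then the same inequality holds for a minimum such set, i.e. for $\dimms(G)$.

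First I would fix an equivalence class $\left[u_i\right]$ of the twin relation and argue that $S$ must contain all but at most one of its members. This follows by applying Proposition~\ref{prop-twins} pairwise: if two distinct vertices $x,y \in \left[u_i\right]$ were both outside $S$, then $x$ and $y$ would be twins with neither in $S$, contradicting the proposition. Hence $\left|\left[u_i\right] \setminus S\right| \le 1$, equivalently $\left|\left[u_i\right] \cap S\right| \ge \left|\left[u_i\right]\right| - 1$. The one subtlety worth stating carefully is that the twin relation is genuinely an equivalence relation (noted in the excerpt), so pairs of vertices within a single class are indeed twins; the argument is local to each class and does not require twins from different classes to interact.

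Second, since the classes $\left[u_1\right], \ldots, \left[u_t\right]$ partition $V(G)$, the sets $\left[u_i\right] \cap S$ are pairwise disjoint, so summing the per-class bounds gives
\[
|S| \;\ge\; \sum_{i=1}^{t} \left|\left[u_i\right] \cap S\right| \;\ge\; \sum_{i=1}^{t}\left(\left|\left[u_i\right]\right|-1\right),
\]
which is the desired inequality. Taking $S$ to be a minimum \weakly{} \orderlessResolving{} yields $\dimms(G) \ge \sum_{i=1}^{t}\left(\left|\left[u_i\right]\right|-1\right)$.

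I do not expect a genuine obstacle here: the result is essentially a bookkeeping consequence of Proposition~\ref{prop-twins}. The only point requiring a line of care is the disjointness/partition step — one must not double-count a vertex lying in $S$ — which is handled simply by invoking that the twin relation partitions $V(G)$. Note also the corollary holds vacuously (both sides zero) when $G$ is twin-free.
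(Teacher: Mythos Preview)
Your argument is correct and matches the paper's own proof, which simply notes that by Proposition~\ref{prop-twins} at most one vertex from each twin equivalence class can lie outside any \weakly{} \orderlessResolving{}, and then sums over the (disjoint) classes. One tiny slip in your final remark: when $G$ is twin-free only the right-hand side is zero, not both sides, but the inequality is still trivially satisfied since $\dimms(G)\ge 1$.
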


\begin{proof}
The result follows from the fact that, for every twin equivalence class,
at most one element can be left out of any \weakly{} \orderlessResolving.
\end{proof}

\section{Complexity of the \orderlessMetricDimension{} problem}
\label{sec-complexity}

In the previous section, we showed that algorithms able to compute the metric 
dimension can be used to determined or bound the \orderlessMetricDimension. 
The trouble is, however, that calculating the metric dimension is 
NP-Hard~\cite{khuller1996landmarks}. We prove in this section that computing 
the \orderlessMetricDimension{} of a simple connected 
graph  is NP-hard as well.
The proof is, in some way, inspired by the NP-hardness proof of the 
metric dimension problem given in \cite{khuller1996landmarks}. 
To begin with, we formally state the decision problem associated 
to the computation of the \orderlessMetricDimension:

\medskip

\noindent
\textbf{\OrderlessMetricDimension{}} (\decisionProblemName)\\
INSTANCE: A graph $G=(V,E)$ and an integer $k$ satisfying $1 \le k \le |V|-1$.\\
QUESTION: Is $\dimms(G) \le k$?

\begin{theorem}
The problem \decisionProblemName{} is NP-complete.
\end{theorem}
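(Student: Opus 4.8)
The plan is to show membership in NP and then NP-hardness by reduction from a known NP-complete problem. Membership in NP is routine: given a candidate set $S$ with $|S| \le k$, one can in polynomial time compute all pairwise distances (e.g. BFS from each vertex of $S$), form the multisets $\msrepr(u|S)$ for every $u \in V \setminus S$, and check that they are pairwise distinct; hence $S$ is a polynomially-checkable certificate that $\dimms(G) \le k$.

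For NP-hardness, following the hint that the argument is inspired by Khuller et al.~\cite{khuller1996landmarks}, I would reduce from \textsc{$3$-SAT} (or directly adapt their reduction from \textsc{$3$-Dimensional Matching} or \textsc{$3$-SAT}, whichever they use). The idea is to build, from a formula $\varphi$ with variables $x_1,\dots,x_n$ and clauses $c_1,\dots,c_m$, a graph $G_\varphi$ in which an \orderlessResolving{} of a prescribed size $k$ exists if and only if $\varphi$ is satisfiable. The crucial design principle, which differs from the metric-dimension construction, is that multiset representations forget which landmark realised which distance; so the gadgets must be built so that the ``coordinates'' of a representation can still be told apart. A standard device is to attach, to each would-be landmark location, a small rigid attachment (for instance a pendant path or a pendant twin pair) of a distinct length, or to place the candidate landmarks at mutually different distances from a common anchor, so that in any small resolving set the multiset of distances to $S$ effectively encodes an ordered tuple. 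One then reuses the variable gadgets (a pair of true twins, say $T_i$ and $F_i$, forced by Proposition~\ref{prop-twins} or Corollary~\ref{cor-twin-equiv-classes} to contribute at least one vertex each to $S$), so that a size-$k$ \resolvingName{} is forced to pick exactly one of $\{T_i,F_i\}$ per variable — this choice is the truth assignment — plus a fixed ``skeleton'' set of forced vertices; the clause gadgets are arranged so that the chosen per-variable vertices resolve the clause vertices precisely when each clause has a satisfied literal.

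Concretely, the steps I would carry out are: (1) state \decisionProblemName{} and show it is in NP as above; (2) describe the construction $G_\varphi$, with the variable gadgets built from twin pairs and the clause gadgets connected so that a literal vertex is adjacent to its clause vertex; add a small number of auxiliary ``forced'' vertices/pendants whose role is both to break the unordered symmetry of multisets and to force the skeleton of any minimum \orderlessResolving; (3) set $k := n + (\text{size of the forced skeleton})$; (4) prove completeness: from a satisfying assignment, take $S$ to consist of the skeleton plus, for each $i$, the vertex among $\{T_i,F_i\}$ corresponding to the truth value of $x_i$, and verify by a case analysis on distances that every non-$S$ vertex has a unique multiset representation — here the twin/clause vertices are distinguished using the distinct distances provided by the skeleton and the satisfied literals; (5) prove soundness: from any \orderlessResolving{} $S$ with $|S|\le k$, use Proposition~\ref{prop-twins} to argue that $S$ must contain at least one vertex from each variable twin pair, that the counting forces $|S| = k$ with exactly one vertex per pair and the full skeleton, and that if some clause gadget had all three literal vertices ``false'' then two of its vertices would collide in multiset representation; reading off the chosen literals yields a satisfying assignment.

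The main obstacle, and the place where the proof genuinely departs from the metric-dimension case, is controlling collisions that arise purely from the \emph{unorderedness} of the multiset: two vertices $u,v$ may have all the same individual distances to landmarks (so $\msrepr(u|S)=\msrepr(v|S)$) even though $\mrepr(u|S)\neq\mrepr(v|S)$, so the reduction must be engineered so that no such ``accidental'' multiset coincidences occur outside the intended clause-satisfaction failures. I expect this to be handled by making the skeleton landmarks sit at pairwise distinct distances from every relevant gadget vertex (e.g.\ by giving them pendant paths of lengths $1,2,3,\dots$), so that the smallest entry, or the sorted tuple, of a multiset representation already pins down enough ordered information; verifying that this suffices — i.e.\ the full distance case analysis in steps (4) and (5) — is the technical heart of the argument, but it is routine once the gadget is chosen correctly.
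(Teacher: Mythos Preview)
Your high-level plan matches the paper's: reduction from \textsc{3-SAT}, variable and clause gadgets, and large pendant sets of pairwise distinct sizes attached to each gadget so that the multiset of distances already identifies which gadget a vertex belongs to. The paper realises your ``pendants of distinct lengths'' idea concretely by hanging a set $Q_i$ of $2in$ leaves on the $i$-th variable gadget and a set $P_j$ of $2jn+2n^2$ leaves on the $j$-th clause gadget; these leaves, being mutual twins, are forced into any \orderlessBase{} by Proposition~\ref{prop-twins}, and their carefully spaced cardinalities supply the ordered information you anticipated needing.

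However, your specific variable gadget has a fatal flaw. You propose that $T_i$ and $F_i$ be \emph{true twins}, so that Proposition~\ref{prop-twins} forces exactly one of them into $S$, and that this choice encode the truth value of $x_i$. But if $T_i$ and $F_i$ are twins, then $d(T_i,v)=d(F_i,v)$ for every $v\notin\{T_i,F_i\}$; hence swapping $T_i$ for $F_i$ in $S$ leaves the multiset representation of \emph{every} clause vertex unchanged. The ``assignment'' is invisible to the clause gadgets, and your step~(5) cannot go through: no clause vertex can detect whether a given literal is true or false. (This objection already kills the analogous gadget for the ordinary metric dimension, not only the multiset variant.)

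The paper sidesteps this with a subtler device. Its variable gadget contains four vertices $a_i^1,a_i^2,b_i^1,b_i^2$ that are \emph{not} twins, yet are arranged symmetrically enough that at least one of them must lie in any \orderlessResolving{} (otherwise some pair among them has identical multiset representation). These four vertices sit at genuinely different distances from $T_i$ and $F_i$, which in turn are connected \emph{asymmetrically} to the clause vertex $c_j^3$ according to the sign of $x_i$ in $C_j$. Thus the choice of which of $a_i^1,a_i^2,b_i^1,b_i^2$ enters $S$ does affect $\msrepr(c_j^3\mid S)$, and the paper shows that $\msrepr(c_j^1\mid S)=\msrepr(c_j^3\mid S)$ precisely when $C_j$ is unsatisfied. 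The moral is that the ``forced-choice'' vertices must be non-twin but multiset-symmetric from outside the gadget; engineering this, rather than the pendant trick, is the crux your outline has not yet supplied.
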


\begin{proof}
The problem is clearly in NP. We give the NP-completeness proof by a reduction 
from 3-SAT. Consider an arbitrary input to 3-SAT, that is, a formula $F$ with 
$n$ variables and $m$ clauses. Let $x_1, x_2, \dots, x_n$ be the variables, and 
let $C_1, C_2, \dots, C_m$ be the clauses of $F$. We next construct a connected 
graph $G$ based on this formula $F$. To this end, we use the following gadgets.

For each variable $x_i$ we construct a gadget as follows (see Fig. 
\ref{fig:gadgetxi}).
\begin{itemize}
\item Nodes $T_i$, $F_i$ are the ``true'' and ``false'' ends of the gadget. The 
gadget is attached to
        the rest of the graph only through these nodes.
\item Nodes $a_i^1$, $a_i^2$, $b_i^1$, $b_i^2$ ``represent'' the value of the 
variable $x_i$, that is, $a_i^1$ and $a_i^2$ will be used to represent that 
variable $x_i$ is true, and $b_i^1$ and $b_i^2$ that it is false.
\item Nodes $d_i^1$ and $d_i^2$ will help to differentiate between nodes in 
different gadgets.
\item $Q_i$ is a set of end-nodes of cardinality $q_i$ adjacent to $d_i^1$. 
Notice that all these nodes are indistinguishable from $d_i^2$. Moreover, the 
cardinalities of these sets $Q_i$ are pairwise distinct, which is necessary for 
our purposes in the proof. We further on state the explicit values of their 
cardinalities.
\end{itemize}

\begin{figure}[!ht]
    \centering
        \begin{tikzpicture}
        \tikzset{main node/.style={circle,fill=white,draw,minimum 
        size=0.7cm,inner sep=0pt},}
        \tikzset{many node/.style={circle,fill=white,draw,minimum size=0.9cm},}
        \node[main node] (1) {$a_i^1$};
        \node[main node] (2) [right = 1cm of 1]  {$b_i^1$};
        \node[main node] (3) [below = 1cm of 1]  {$a_i^2$};
        \node[main node] (4) [below = 1cm of 2]  {$b_i^2$};
        \node[main node] (5) [below left = 2.5cm and 1.5cm of 1] {$T_i$};
        \node[main node] (6) [below right = 2.5cm and 1.5cm of 2] {$F_i$};
        \node[main node] (7) [below right = 0.2cm and 2.4cm of 5] {$d_i^1$};
        \node[main node] (8) [below left = 0.5cm and 0.5cm of 7] {$d_i^2$};
        \node[many node] (9) [below right = 0.5cm and 0.5cm of 7] {$Q_i$};

        \path[draw]
        (1) edge node {} (2)
        (3) edge node {} (4)
        (5) edge node {} (1)
        (2) edge node {} (6)
        (4) edge node {} (6)
        (3) edge node {} (5)
        (5) edge node {} (7)
        (6) edge node {} (7)
        (7) edge node {} (8);
        \path[draw,very thick]
        (7) edge node {} (9);
        \end{tikzpicture}
\caption{Gadget of a variable $x_i$}
\label{fig:gadgetxi}
\end{figure}
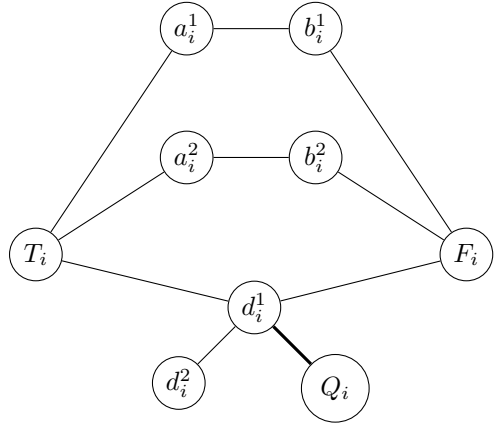
For each clause $C_j$ we construct a gadget as follows (see Figure 
\ref{fig:gadgetCj}).

\begin{itemize}
 \item Nodes $c_j^1$ and $c_j^3$ will be helpful in determining the truth 
 value of $C_j$.
 \item Nodes $c_j^2$ and $c_j^4$ will help to differentiate between nodes in 
 different gadgets.
 \item $P_j$ is a set of end-nodes of cardinality $p_j$ adjacent to $c_j^2$. 
 Notice that all these nodes are indistinguishable from $c_j^4$. As in the case 
 of the sets $Q_i$ from the variable gadgets, the cardinalities of these sets 
 $P_j$ are also pairwise distinct.
\end{itemize}

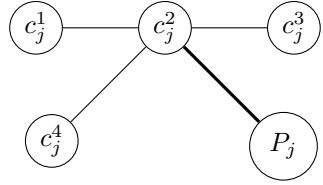
\begin{figure}[!ht]
    \centering
    \begin{tikzpicture}
    \tikzset{main node/.style={circle,fill=white,draw,minimum size=0.7cm,inner 
    sep=0pt},}
    \tikzset{many node/.style={circle,fill=white,draw,minimum size=0.9cm},}
    \node[main node] (1) {$c_j^1$};
    \node[main node] (2) [right = 1cm of 1]  {$c_j^2$};
    \node[main node] (3) [right = 1cm of 2]  {$c_j^3$};
    \node[main node] (4) [below left = 1cm and 1cm of 2] {$c_j^4$};
    \node[many node] (5) [below right = 1cm and 1cm of 2] {$P_j$};

    \path[draw]
    (1) edge node {} (2)
    (2) edge node {} (3)
    (2) edge node {} (4);
    \path[draw,very thick]
    (2) edge node {} (5);
    \end{tikzpicture}
    \caption{Gadget of clause $C_j$}
    \label{fig:gadgetCj}
    \end{figure}

As mentioned before, we require some conditions on the cardinalities of the 
sets $P_i$ and $Q_i$ from the variables and clauses gadgets, respectively. The 
values of their cardinalities (which we require in our proof) are as follows. 
For every $i\in \{1,\dots, n\}$ we make $q_i = 2 \cdot i \cdot n$, and for every 
$j\in \{1,\dots, m\}$ we make $p_j = 2 \cdot j \cdot n + 2 n ^ 2$. In 
concordance, we notice that the set of numbers $p_i$ and $q_j$ are pairwise 
distinct. Also, we clearly see that $\sum q_i + \sum p_j$ 
is polynomial in $n + m$. 

The gadgets representing the variables and the gadgets representing the clauses 
are connected in the following way in order to construct our graph $G$.

    \begin{itemize}
        \item Nodes $c_j^1$, for every $j$, are adjacent to nodes $T_i$, $F_i$ 
        for all $i$.
        \item If a variable $x_i$ does not appear in a clause $C_j$, then the 
        nodes $T_i$, $F_i$ are adjacent to $c_j^3$.
        \item If a variable $x_i$ appears as a positive literal in a clause 
        $C_j$, then the node $F_i$ is adjacent to $c_j^3$.
        \item If a variable $x_i$ appears as a negative literal in a clause 
        $C_j$, then the node $T_i$ is adjacent to $c_j^3$.
    \end{itemize}

We first remark that the constructed graph $G$ is connected, and that its order 
is polynomial in the number of variables and clauses of the original 
3-SAT instance. We will prove now that the formula $F$ is satisfiable if and 
only if the multiset dimension of $G$ is exactly $M=\sum_{i=1}^n q_i + 
\sum_{j=1}^m p_j + n$.

First, let us look at some properties that must be fulfilled 
by a \orderlessResolving{} $S$ of minimum cardinality in $G$. 
First, as the nodes in 
$Q_i\cup \{d_i^2\}$, for every $i\in\{1,\dots,n\}$, are indistinguishable among 
them, and at least $|Q_i|$ of them must be in $S$, we can assume without lost 
of generality that $Q_i\subset S$. By using a similar reasoning, also 
$P_j\subset S$ for every $j\in\{1,\dots,m\}$. Moreover, for every 
$i\in\{1,\dots,n\}$, at least one of the nodes $a_i^1$, $a_i^2$, $b_i^1$, 
$b_i^2$ must be in $S$, otherwise some pairs of them would have the same 
\orderlessMetricRepresentation, which is not possible. 
Thus, the cardinality of $S$ is 
at least $M$. Clearly, if $M=|S|$, then we have already fully described a set 
of nodes that could represent $S$.

\begin{lemma}\label{keylemmaNP1}
Consider a set $S^*$ containing exactly $M$ nodes given as follows. All nodes 
in $Q_i$ for $i \in \{1, \ldots,
n\}$, all nodes in $P_j$ for $j \in \{1, \ldots,  m\}$, and exactly one node 
from each set $\{a_i^1, a_i^2, b_i^1 , b_i^2\}$ for $i \in \{1, \ldots,
 n\}$ are in $S^*$. Then, all pairs of nodes have different \orderlessMetricRepresentationPl{} with respect to $S^*$, except possibly $c_j^1$ and $c_j^3$ 
 $($for some $j\in \{1, \ldots,  m\}$$)$.
 \end{lemma}

\begin{proof}
To prove the lemma, we will explicitly compute the \orderlessMetricRepresentation{} of 
each node. For easier representation, we use a vector $(x_1, \ldots, x_n)$ to 
denote the multiset over positive integers such that $1$ has multiplicity 
$x_1$, $2$ has multiplicity $x_2$, and so on. 
        \begin{itemize}
            \item $\msrepr( c_j^4 |  S^* ) = (0, p_j, 0, n, \cdots)$ 
            \item $\msrepr( c_j^2 | S^* ) = (p_j, \cdots)$
            \item $\msrepr(d_i^1 | S^* ) = (q_i, \cdots)$
            \item $\msrepr(d_i^2 | S^* ) = (0, q_i, \cdots)$
            \item $(\msrepr(T_i | S^* ), \msrepr( F_i| S^* ))$ is equal 
            to either  $((1, q_i, \cdots), (0, q_i + 1, 
            \cdots))$ or $((0, q_i + 1, \cdots), (1, q_i, \cdots))$
            \item $a_i^1$, $a_i^2$, $b_i^1$, $b_i^2$: Assume $b_i^2 \in S^*$. Then $\{ \msrepr(a_i^1 | S^* ),  \msrepr(a_i^2 | S^* ), 
            \msrepr(b_i^1 | S^* )\} = \{(1, 
			0, q_i, \cdots),(0, 1, q_i, \cdots),(0, 0, q_i + 1, 
			\cdots)\}$. An analogous result remains if the assumption that $b_i^2 \in S^*$ is dropped, based on the following 
observations. First, one and only one of the nodes $a_i^1, a_i^2, b_i^1, 
b_i^2$ is in $S^*$, and the distances from the other three to this one are
            exactly $1, 2, 3$ in some order.
%
			Second, each of these nodes have $q_i$ nodes at distance 3. 
            \item $\msrepr(c_j^1 | S^* ) = (0, p_j + n, \sum_{i=1}^n q_i,  \sum_{l=1}^m p_l - 
            p_j)$
            \item $c_j^3 : $ the number of nodes at distance two depends on 
            which node belongs to $S^*$ from each variable gadget. We 
            distinguish three possible cases for the distance between $c_j^3$ 
            and the node from $S^*$ belonging to the gadget corresponding to a 
            variable $x_i$.
                \begin{itemize}
                    \item If $x_i$ appears in $C_j$ as a positive literal, and 
                    $a_i^1\in S^*$ or $a_i ^2\in S^*$, then such distance is 3.
                    \item If $x_i$ appears in $C_j$ as a negative literal, and 
                    $b_i^1\in S^*$ or $b_i ^2\in S^*$, then such distance is 3.
                    \item If none of the above situations occurs, then such 
                    distance is 2.
                \end{itemize}
			Therefore, the \orderlessMetricRepresentation{} of $c_j^3$ is related to the 
			set $(0, p_j +
			w_j,\sum q_i + n - w_j, \sum p_l - p_j)$, where $w_j$ is the 
			number of nodes
			from gadgets representing some $x_i$ matching the third case above. 
        \end{itemize}
        Notice that, as the difference between any $p_j$ and any $q_i$ is at 
        least $2n$, all pairs of nodes have also a different \orderlessMetricRepresentation{}, except possibly $(c_j^1, c_j^3)$ that depend on the 
        selected nodes from each variable gadget. We next particularise some of 
        these situations.
        \begin{itemize}
            \item As $q_i \neq p_j$ for every $i \in \{1, \ldots, n\}$ and 
            every $j \in \{1, \ldots, m\}$, we observe $\msrepr ( c_j^2 | S^*) 
            \neq \msrepr ( d_i^1 | S^*)$, $\msrepr ( c_j^4 | S^*) \neq \msrepr ( 
            d_i^2 | S^*)$ .
            \item Since $p_j \neq q_i+1$ for every $i \in \{1, \ldots,
            n\}$ and every $j \in \{1, \ldots, m\}$, we deduce $\msrepr ( c_j^4 
            | S^*) \neq \msrepr ( T_i | S^*)$ and $\msrepr ( c_j^4 | S^*)  \neq 
            \msrepr ( F_i | S^*)$.
            \item Since $p_{j_1} \neq p_{j_2}+n$ for every $j_1, j_2 \in \{1, 
            \ldots, m\}$, we get $\msrepr ( c_{j_1}^4 | S^*) \neq \msrepr ( 
            c_{j_2}^1 | S^*)$.

        \end{itemize}
Remaining cases trivially follow, and are left to the reader, and so the proof of 
the lemma is complete.  \end{proof}

We will now show a way to transform the set $S^*$ into values for the variables 
$x_i$ that will lead to a satisfiable assignment for $F$. If $S^*\cap 
\{a_i^1,a_i^2\}\ne \emptyset$ for some variable $x_i$, then we set the variable 
$x_i=\mathtt{true}$ (with respect to $S^*$). Otherwise ($S^*\cap 
\{a_i^1,a_i^2\}=\emptyset$ or equivalently $S^*\cap \{b_i^1,b_i^2\}\ne 
\emptyset$), we set $x_i=\mathtt{false}$. Hence, the clause $C_j$ is 
$\mathtt{true}$ or $\mathtt{false}$ in the natural way, according to the values 
previously given to its variables.

\begin{lemma}\label{keylemmaNP2}
Let $S^*$ be a set of nodes as defined in the premise of Lemma~\ref{keylemmaNP1}. 
Then $c_j^1$ and $c_j^3$ have 
different \orderlessMetricRepresentationPl{} with respect to $S^*$ 
if and only if the clause $C_j$ is $\mathtt{true}$. 
\end{lemma}

\begin{proof}
Notice that the distance between $c_j^3$ and the node in $S^*$ from the gadget 
corresponding to $x_i$ is 3 if and only if the clause $C_j$ is $\mathtt{true}$ 
(see Lemma \ref{keylemmaNP1}). Thus, $w_j = 0$ (as defined in Lemma 
\ref{keylemmaNP1}) when the clause $C_j$ is $\mathtt{false}$, and only in this 
case  $\msrepr ( c_j^3 | S^*) = \msrepr(c_j^1 | S^*)$.
\end{proof}

By using the lemmas above, we conclude the NP-completeness reduction, through 
the following two lemmas.

\begin{lemma}
If $F$ is satisfiable, then the \orderlessMetricDimension{} of $G$ is $M$.  
\end{lemma}

\begin{proof}
Recall that $\dimms(G)\ge M$. It remains to prove that if $F$ is satisfiable then $\dimms(G)\le M$. Let us construct a set $S$ in the following way. If $x_i$ is $\mathtt{true}$, 
then $a_i^1 \in S$. Otherwise ($x_i$ is $\mathtt{false}$), $b_i^1 \in S$. Also, 
we add to $S$ all nodes in the sets $P_j$ and $Q_i$. Hence, according to Lemmas 
\ref{keylemmaNP1} and \ref{keylemmaNP2}, $S$ is a \orderlessResolving, 
and its cardinality is exactly $M$.
\end{proof}

\begin{lemma}
If the \orderlessMetricDimension{} of $G$ is $M$, then $F$ is satisfiable. 
\end{lemma}

\begin{proof}
Let $S$ be a set of nodes of cardinality equal to the multiset dimension of 
$G$. Hence, as explained before, without lost of generality all nodes in the 
sets $P_j$, $Q_i$, and exactly one node of  $a_i^1$, $a_i^2$, $b_i^1$, $b_i^2$, 
must belong to $S$, and no other node is in $S$. If $a_i^1\in S$ or $a_i^2\in 
S$, then let $x_i$ be $\mathtt{true}$. Otherwise, let $x_i$ be 
$\mathtt{false}$. Since $S$ is a \orderlessResolving, 
according to Lemmas \ref{keylemmaNP1} and \ref{keylemmaNP2}, all clauses $C_j$ 
of $F$ must be $\mathtt{true}$, unless the nodes $c_j^1$ and $c_j^3$ would have 
the same \orderlessMetricRepresentation{}, which is not possible. If all clauses of $F$ 
are $\mathtt{true}$, then $F$ is satisfiable, as claimed.
\end{proof}
The last two lemmas together complete the reduction from 3-SAT to the problem 
of deciding whether the \orderlessMetricDimension{} of a graph $G$ is equal 
to a given positive integer. The latter problem can in turn 
be trivially reduced to \decisionProblemName. This completes the proof. 
\end{proof}

\section{Particular cases involving trees}\label{sect-trees}

Given that, in general, computing the \orderlessMetricDimension{} of a graph 
is NP-hard, it remains an open question for which families of graphs 
the \orderlessMetricDimension{} can be 
efficiently computed. The goal of this section is to provide a computational 
procedure and a closed formula to compute the 
\orderlessMetricDimension{} of full $\degree$-ary 
trees. A \emph{full $\degree$-ary tree} is a rooted tree 
whose root has degree~$\degree$, all its leaves are at the same distance from 
the root, and its descendants are either leaves or vertices of degree 
$\degree+1$. We expect the results obtained in this section to pave the way 
for the study of the \orderlessMetricDimension{} of general trees. 

\noindent
\emph{Notation.}
Given a multiset $M$ and an element $x$, we denote the multiplicity 
	of $x$ in $M$ as $\multiplicity{M}{x}$. We use $\ecc_G(x)$ to denote the 
	eccentricity of the vertex $x$ in a 
	graph $G$, which is defined
	as the largest distance between $x$ and any other vertex in the graph. We 
	will simply write $\ecc(x)$ if the considered graph is clear
	from the context.
Given a tree $T$ rooted in $w$, we use $T_x$ to denote the subtree
	induced by $x$ and all descendants of $x$, \emph{i.e.} those vertices 
	having a shortest path to $w$ that contains~$x$.	
Finally, an 
	\orderlessBase{} is said to be an outer multiset resolving set of minimum 
	cardinality.

%

We start by enunciating a simple lemma that
characterises \orderlessResolvingPl{} in full $\degree$-ary trees.

\begin{lemma}\label{theo-tree-main1}
Let $T$ be a full $\degree$-ary tree rooted in $w$ with $\degree > 1$. A set of
vertices $S \subseteq V(T)$ is an outer multiset resolving set if and only if
$\forall_{u, v \in V(T)\setminus S}\colon d(u, w) = d(v, w) \implies \msrepr(u|S) \neq
\msrepr(v|S)$.
\end{lemma}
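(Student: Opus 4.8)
The statement is a biconditional, but one direction is immediate: if $S$ is an outer multiset resolving set, then by definition $\msrepr(u|S) \neq \msrepr(v|S)$ for \emph{all} distinct $u,v \in V(T)\setminus S$, so in particular this holds whenever $d(u,w) = d(v,w)$. The content is therefore entirely in the converse: assuming $S$ distinguishes (via multiset representations) every pair of non-$S$ vertices lying at the same distance from the root $w$, I must show it actually distinguishes \emph{every} pair of distinct non-$S$ vertices. So the plan is to take $u, v \in V(T)\setminus S$ with $d(u,w) \neq d(v,w)$, and argue that automatically $\msrepr(u|S) \neq \msrepr(v|S)$, using nothing about $S$ other than that it is non-empty (which it is, since an outer multiset resolving set of a non-trivial graph has size at least $1$ by Remark~\ref{rm-general-bounds-msdim}).

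The key observation I would exploit is a parity/distance-sum invariant special to trees. In a tree, for any vertex $x$ and any fixed vertex $s$, the distance $d(x,s)$ is determined by $d(x,w)$, $d(s,w)$, and $d(a,w)$ where $a$ is the lowest common ancestor of $x$ and $s$; concretely $d(x,s) = d(x,w) + d(s,w) - 2\,d(a,w)$. Summing over all $s \in S$ gives $\sum_{s \in S} d(x,s) = |S|\cdot d(x,w) + \big(\sum_{s\in S} d(s,w)\big) - 2\sum_{s\in S} d(\mathrm{lca}(x,s),w)$. The point is that the multiset $\msrepr(x|S)$ determines the sum $\sum_{s\in S} d(x,s)$, and I want to show this sum — or at least some function of $\msrepr(x|S)$ — strictly increases (or at least changes) when $d(x,w)$ changes, for vertices in a full $\degree$-ary tree. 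A cleaner route: in a full $\degree$-ary tree all leaves are at the same depth $h$ (the height), and the eccentricity structure is rigid. For a vertex $x$ at depth $\ell$, its maximum distance to anything — hence $\max \msrepr(x|S)$ can be analyzed — but more robustly, consider the \emph{sum of distances to the root's...} Actually the sharpest tool is: fix any single $s \in S$ at depth $\ell_s$; then over all vertices $x$ at a given depth $\ell$, the value $d(x,s)$ ranges in a controlled interval, and crucially $d(x,s) \equiv \ell + \ell_s \pmod 2$. Therefore every entry of $\msrepr(x|S)$ has parity $\ell + \ell_s \pmod 2$ relative to the corresponding $s$, so the multiset $\msrepr(x|S)$, compared entrywise-by-membership against the fixed reference multiset $\{\hspace*{-0.1cm}| \ell_s : s \in S |\hspace*{-0.1cm}\}$ of depths, encodes $\ell \bmod 2$. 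This already separates $u$ and $v$ whenever $d(u,w) \not\equiv d(v,w) \pmod 2$.

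The remaining case is $d(u,w) \neq d(v,w)$ but $d(u,w) \equiv d(v,w) \pmod 2$, say $d(u,w) = \ell_1 < \ell_2 = d(v,w)$ with $\ell_2 - \ell_1 \geq 2$. Here I would use a monotonicity argument on the minimum entry, or on the full multiset: pick $s_0 \in S$ realizing the minimum of $\msrepr(u|S)$. In a tree, $d(u,s_0) \geq |d(u,w) - d(s_0,w)| = |\ell_1 - \ell_{s_0}|$ and similarly for $v$; combining the parity fact with a counting argument on how many $s \in S$ can be ancestors/descendants of $u$ versus $v$, the multisets cannot coincide. I expect the cleanest formulation is: the sum $\Sigma(x) := \sum_{s\in S} d(x,s)$ satisfies $\Sigma(x) = |S|\,d(x,w) + c - 2\sum_{s\in S}d(\mathrm{lca}(x,s),w)$ with $c$ a constant, and since $d(\mathrm{lca}(x,s),w) \leq \min(d(x,w), d(s,w)) \leq d(x,w)$, the correction term is bounded, so $\Sigma$ grows roughly linearly in $d(x,w)$ — and one checks the bound is tight enough that $d(u,w) < d(v,w)$ forces $\Sigma(u) < \Sigma(v)$, hence $\msrepr(u|S) \neq \msrepr(v|S)$. \textbf{The main obstacle} is precisely controlling this lca-correction term uniformly: a priori $\sum_{s\in S} d(\mathrm{lca}(x,s),w)$ could be large if many elements of $S$ sit on the path from $w$ to $x$, so I need the structural rigidity of full $\degree$-ary trees (all leaves at equal depth, internal degree exactly $\degree+1$) to bound how "deep" the shared ancestry between $x$ and $S$ can be relative to $d(x,w)$, and to confirm the inequality never degenerates into equality. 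I would handle this by splitting $S$ according to whether $s$ lies in the subtree $T_x$, whether $x$ lies in $T_s$, or neither, computing $d(\mathrm{lca}(x,s),w)$ exactly in each subcase, and then comparing the resulting expressions for $u$ versus $v$.
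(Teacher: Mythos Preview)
Your plan has a genuine gap: you explicitly intend to handle the different-depth case ``using nothing about $S$ other than that it is non-empty,'' but this is impossible. Take the full binary tree of depth $2$ with root $w$, children $a,b$, and grandchildren $a_1,a_2,b_1,b_2$, and set $S=\{a\}$. Then $\msrepr(w|S)=\msl 1\msr=\msrepr(a_1|S)$ although $d(w,w)=0\neq 2=d(a_1,w)$. Of course this $S$ does \emph{not} satisfy the lemma's hypothesis (the leaves $a_1,a_2$ are undistinguished), which is exactly the point: the hypothesis is not a side condition you can discard for the cross-depth case; it is what forces $S$ to contain enough structure to separate depths.

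Both of your concrete devices inherit this flaw. The parity argument only recovers $d(x,w)\bmod 2$ from the count of even entries in $\msrepr(x|S)$ when $S$ has unequally many vertices at even and odd depths, which need not hold. The sum $\Sigma(x)=\sum_{s\in S}d(x,s)$ is not monotone in $d(x,w)$ without further input: in the example above $\Sigma(w)=\Sigma(a_1)=1$. Splitting $S$ by the three lca-cases and bounding the correction term cannot rescue this without invoking the hypothesis to constrain how $S$ is distributed among the subtrees.

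The paper's proof \emph{does} use the hypothesis, and in a very short way. For $x,y\notin S$ with $d(x,w)<d(y,w)$, the full $\degree$-ary structure gives $\ecc(x)<\ecc(y)$, and there is a pair of sibling leaves $y_1,y_2$ with $d(y_1,y)=d(y_2,y)=\ecc(y)$. Sibling leaves are false twins at the same depth, so the hypothesis forces one of them, say $y_1$, into $S$. Then $\ecc(y)=d(y,y_1)\in\msrepr(y|S)$, while every entry of $\msrepr(x|S)$ is at most $\ecc(x)<\ecc(y)$; hence the multisets differ. The idea you are missing is precisely this: the same-depth hypothesis, applied to sibling leaves, guarantees an element of $S$ witnessing the maximum of $\msrepr(y|S)$, and that maximum strictly increases with depth.
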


\begin{proof}
Necessity follows from the definition of \weak{} \orderlessResolvingPl. To 
prove 
sufficiency we need to prove that
\begin{equation*}\label{eq2-theo-tree-main1}
\forall_{u, v
\in V(T)\setminus S}\colon d(u, w) \neq d(v, w) \implies \msrepr(u|S) \neq
\msrepr(v|S) \text{.}
\end{equation*}


Take two
vertices $x, y \in V(T)\setminus S$ such that $d(x, w) < d(y, w)$. Because $T$
is a full $\degree$-ary tree, we obtain that $d(x, w) <
d(y, w) \iff \ecc_{T}(x) < \ecc_{T}(y)$. Also, there must exist two 
leaf vertices $y_1, y_2$ in $T$ which are siblings 
and satisfy $d(y_1, y) = d(y_2, y)= \ecc_{T}(y)$. 
Considering that $y_1$ and $y_2$ are false twins, we obtain
that $
y_1, y_2 \not \in S \implies \msrepr(y_1|S) = \msrepr(y_2 | S)$. Therefore,
given that $d(y_1, w) =
d(y_2, w)$, it follows that $y_1 \in S$ or $y_2 \in S$.
We assume, without loss of generality, that $y_1 \in S$. 
On the one hand, we have that $d(y_1,y) \in \msrepr(y | S)$. 
On the other hand, because $d(y_1, y) = \ecc_{T}(y) > \ecc_{T}(x)$, 
we obtain that $d(y_1, y) \notin \msrepr(x | S)$, 
implying that $\msrepr(x | S) \neq \msrepr(y | S)$. 
\end{proof}

Based on the result above, we provide conditions under which 
an \orderlessBase{} can be constructed in a recursive manner. Recall that an 
\orderlessBase{} is an outer multiset resolving set of minimum 
cardinality. 


\begin{lemma}\label{theo-recursive-basis1}
Given a natural number $\ell > 1$, let $T_1, \ldots, T_{\degree}$ be $\degree$
full
$\degree$-ary
trees of depth $\ell$ with pairwise disjoint vertex sets. 
Let $w_1, \ldots, w_\degree$
be the roots of $T_1, \ldots, T_\degree$, respectively, and let $T$ be the full
$\degree$-ary tree rooted in $w$ defined by
the set of vertices $V(T) = V(T_1) \cup \cdots \cup V(T_\degree) \cup \{w\}$
and edges $E(T) =
E(T_1) \cup \cdots \cup E(T_\degree) \cup \{(w, w_1), \ldots, (w,
w_\degree)\}$. Let $S_1, \ldots, S_\degree$ be \orderlessBases{} of $T_1, \ldots, T_\degree$,
respectively. Then
\begin{align*}
& \forall_{i \neq j \in \{1, \ldots, \degree\}} \multiplicity{\msrepr_{T_i}(w_i
|
S_i)}{\ecc_{T_i}(w_i)} \neq \multiplicity{\msrepr_{Tj}(w_j |
S_j)}{\ecc_{T_j}(w_j)} \implies \\
& \quad \quad S_1 \cup \ldots \cup S_\degree \text{ is an
\orderlessBase{} of } T \text{.}
\end{align*}
\end{lemma}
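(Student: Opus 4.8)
The plan is to use the characterisation from Lemma~\ref{theo-tree-main1}: since $T$ is a full $\degree$-ary tree, it suffices to show that $S := S_1 \cup \cdots \cup S_\degree$ distinguishes every pair of vertices $u, v \in V(T) \setminus S$ with $d(u,w) = d(v,w)$, and then separately argue minimality of $|S|$. First I would observe that, because each $S_i$ contains at least one vertex of $T_i$ (by Remark~\ref{rm-general-bounds-msdim}, $|S_i| \ge 1$, and in fact since $T_i$ is not a path its basis has size $\ge 2$), and distances inside $T_i$ from any vertex are computed the same way in $T$ as in $T_i$, the key numerical fact is: for $x \in V(T_i)$ and a landmark $s \in S_j$ with $j \ne i$, we have $d_T(x, s) = d_T(x, w_i) + 1 + d_{T_j}(w_j, s)$, i.e. the distances to the ``foreign'' landmarks are shifted by passing through $w$. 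In particular, for $x \in V(T_i)$, the multiset $\msrepr_T(x \mid S)$ decomposes as $\msrepr_{T_i}(x \mid S_i)$ together with, for each $j \ne i$, the multiset $\{\hspace*{-0.1cm}| d_{T_i}(x, w_i) + 1 + d : d \in \msrepr_{T_j}(w_j \mid S_j) |\hspace*{-0.1cm}\}$.

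Next I would handle the pair $u, v \in V(T) \setminus S$ with $d_T(u,w) = d_T(v,w) =: k$. If $k = 0$ then $u = v = w$, nothing to prove. If $u, v$ lie in the same subtree $T_i$, then $d_{T_i}(u, w_i) = d_{T_i}(v, w_i) = k-1$, and since $S_i$ is an outer multiset resolving set of $T_i$, Lemma~\ref{theo-tree-main1} applied to $T_i$ gives $\msrepr_{T_i}(u \mid S_i) \ne \msrepr_{T_i}(v \mid S_i)$; moreover the contribution of each foreign block depends only on $k-1$ and on $S_j$, hence is identical for $u$ and $v$, so the full multisets still differ. The interesting case is $u \in V(T_i)$, $v \in V(T_j)$ with $i \ne j$. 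Here $d_{T_i}(u, w_i) = d_{T_j}(v, w_j) = k-1$. The largest distance appearing in $\msrepr_T(u \mid S)$ coming from a block other than $S_i$ is $(k-1) + 1 + \ecc_{T_j}(w_j)$ realised with multiplicity at least $\multiplicity{\msrepr_{T_j}(w_j \mid S_j)}{\ecc_{T_j}(w_j)}$; symmetrically for $v$. Using the hypothesis that these multiplicities differ across $i$ and $j$, together with the fact that all subtrees have the same depth $\ell$ (so $\ecc_{T_i}(w_i) = \ecc_{T_j}(w_j) = \ell$, hence the relevant large value $k + \ell$ is the same in both representations), a counting argument on the multiplicity of the value $k+\ell$ — or, if $u$ or $v$ itself realises a still-larger distance, a comparison of the maximum element — forces $\msrepr_T(u \mid S) \ne \msrepr_T(v \mid S)$. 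I would make this precise by isolating the largest value $m$ that occurs in either representation and comparing, case by case, whether $m$ is contributed by $S_i$, by $S_j$, or by some third block, reducing in each situation to either the hypothesis or to $S_i$ (resp.\ $S_j$) already resolving $T_i$ (resp.\ $T_j$).

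For minimality, I would argue that any outer multiset resolving set $S'$ of $T$ must restrict to an outer multiset resolving set of each $T_i$: a pair of vertices inside a single $T_i$ at equal distance from $w_i$ is also at equal distance from $w$, and the foreign landmarks in $S' \setminus V(T_i)$ contribute identically to both (their distances depend only on the common value $d_{T_i}(\cdot, w_i)$), so $S' \cap V(T_i)$ must already distinguish them; invoking Lemma~\ref{theo-tree-main1} for $T_i$, $S' \cap V(T_i)$ is an outer multiset resolving set of $T_i$, whence $|S' \cap V(T_i)| \ge |S_i|$. Summing over $i$ and noting $w$ contributes nothing gives $|S'| \ge \sum_i |S_i| = |S|$, so $S$ is a basis. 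The main obstacle I anticipate is the cross-subtree case: one must rule out a ``conspiracy'' in which the shifted blocks from the various $S_j$ overlap with the internal block $\msrepr_{T_i}(u \mid S_i)$ in just such a way as to reproduce the multiset $\msrepr_T(v \mid S)$; the hypothesis on the differing multiplicities of $\ecc$ is exactly the lever that breaks such conspiracies, but turning it into a clean argument requires carefully choosing which extremal value of the multiset to track.
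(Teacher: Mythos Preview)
Your approach matches the paper's: invoke Lemma~\ref{theo-tree-main1}, split into the same-subtree and cross-subtree cases, and for minimality show that $S'\cap V(T_i)$ must resolve $T_i$. The same-subtree case and the minimality argument are handled exactly as in the paper (your minimality argument, which explicitly restricts to same-depth pairs before invoking Lemma~\ref{theo-tree-main1}, is in fact slightly tidier).

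Where you diverge is the cross-subtree case, and here you are making it harder than necessary. The ``conspiracy'' you worry about cannot occur, and the paper dispatches it in one line with the observation that $\ecc_T(x) > \ecc_{T_i}(x)$ for every $x\in V(T_i)$. Consequently no landmark in $S_i$ lies at distance $\ecc_T(x)$ from $x$, so the multiplicity of $\ecc_T(x)$ in $\msrepr_T(x\mid S)$ is contributed \emph{entirely} by the foreign blocks:
\[
\multiplicity{\msrepr_T(x\mid S)}{\ecc_T(x)}=\sum_{k\ne i}\multiplicity{\msrepr_{T_k}(w_k\mid S_k)}{\ell}.
\]
Taking the same expression for $y\in V(T_j)$ and subtracting gives exactly $\multiplicity{\msrepr_{T_j}(w_j\mid S_j)}{\ell}-\multiplicity{\msrepr_{T_i}(w_i\mid S_i)}{\ell}$, which is nonzero by hypothesis. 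No case analysis on ``which block contributes the maximum'' is needed.

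One small correction: your distance formula for foreign landmarks is off by one. For $x\in V(T_i)$ and $s\in S_j$ with $j\ne i$ the path passes through both $w_i$ and $w_j$, so $d_T(x,s)=d_{T_i}(x,w_i)+2+d_{T_j}(w_j,s)$, and the extremal value is $k+\ell+1$ rather than $k+\ell$. This does not affect the argument.
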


\begin{proof}
Consider two vertices $x$ and $y$ in $T$ such that $d_{T}(x, w) = d_{T}(y, w)$.
We will prove that $\msrepr_{T}(x|S) \neq \msrepr_{T}(y|S)$, which gives that 
$S$ is an \weak{}
\orderlessResolving{} via application of Lemma~\ref{theo-tree-main1}. Our
proof is split in two cases, depending on whether $x$ and $y$ are within the
same sub-branch or not.

First, assume that $x \in V(T_i)$ and $y \in V(T_j)$ for some $i \neq j \in
\{1, \ldots, \degree\}$. For every leaf vertex $z$ in $T$, but not in $T_i$, we
obtain that $d_T(x, z) = \ecc_{T}(x)$. Because $\ecc_{T}(x) > \ecc_{T_i}(x)$,
we get $\msrepr_{T}(x|S)[\ecc_{T}(x)] = \sum_{k \in \{1, \ldots, \degree\}
\setminus \{i\}}
\msrepr_{T_k }(w_k|S)[\ecc_{T_k}(w_k)]$.
Analogously, we obtain that $\msrepr_{T}(y|S)[\ecc_{T}(y)] = \sum_{k \in \{1,
\ldots, \degree\}
\setminus \{j\}}
\msrepr_{T_k }(w_k|S)[\ecc_{T_k}(w_k)]$. Therefore,
\[
\msrepr_{T}(x|S)[\ecc_{T}(x)] - \msrepr_{T}(y|S)[\ecc_{T}(y)] = \msrepr_{T_j
}(w_j|S)[\ecc_{T_j}(w_j)] - \msrepr_{T_i }(w_i|S)[\ecc_{T_i}(w_i)] \text{.}
\]

By considering the fact that $\msrepr_{T_j
}(w_j|S)[\ecc_{T_j}(w_j)] \neq \msrepr_{T_i}(w_i|S)[\ecc_{T_i}(w_i)]$, we
obtain that $\msrepr_{T}(x|S)[\ecc_{T}(x)] \neq
\msrepr_{T}(y|S)[\ecc_{T}(y)]$, which implies that  $\msrepr_{T}(x|S)\neq
 \msrepr_{T}(y|S)$.

For the second case assume that $x \in V(T_i)$ and $y \in V(T_i)$ for some $i
\in \{1, \ldots, \degree\}$. This implies
that
$\msrepr_{T}(x|S_i) \neq \msrepr_{T}(y|S_i)$, because $S_i$ is a
\orderlessResolving{} in $T_i$. Moreover, for every vertex $z \in V(T)
\setminus V(T_i)$ it holds that $d_{T}(x, z) = d_{T}(y, z)$, which gives the
expected result: $\msrepr_{T}(x|S) \neq \msrepr_{T}(y|S)$.

Up to here we have proved that $S$ is an \weak{} \orderlessResolving{}. To 
prove that $S$
is a basis, we only need to show that for any \weak{} \orderlessResolving{} $S'$ 
in $T$,
it is satisfied that the sets $S' \cap V(T_1), \ldots, S' \cap V(T_\degree)$ are
\weak{} \orderlessResolvingPl{} in $T_1, \ldots, T_{\degree}$, respectively. 
Given that $S_1, \ldots,
S_{\degree}$ are \orderlessBases, this would mean that $S$ is an \weak{} 
\orderlessResolving{} of minimum
cardinality.

We proceed by contrapositive. Let $S_1' = S' \cap V(T_1), \ldots, S_{\degree}'
= S' \cap V(T_2)$. Assume that $S_i'$ is not an \weak{} \orderlessResolving{} 
in $T_i$
for some $i \in \{1,\ldots, \degree\}$.
Then, there must exist vertices $x$ and $y$ such that $\msrepr_{T_i}(x | S_i') = \msrepr_{T_i}(y |
S_i')$. As in a previous reasoning, since $x$ and $y$ are both within $T_i$, it
follows that
$\forall_{z
\in V(T) \setminus V(T_i)} d_T(x, z) = d_T(y, z)$. Hence, $\msrepr_{T}(x | S')
= \msrepr_{T}(y | S')$, which is a contradiction.
\end{proof}

Lemma~\ref{theo-recursive-basis1} provides a sufficient condition for
obtaining an \orderlessBase{} of a full
$\degree$-ary tree $T$ by joining bases of the first level branches of
$T$. This is useful for the development of a computational
procedure that finds the \orderlessMetricDimension{} of an arbitrary full
$\degree$-ary tree. Despite this fact, here we are interested in finding 
a closed formula for the \orderlessMetricDimension{} of full 
$\degree$-ary trees. The next result will prove itself a key element 
towards such a goal. 

\begin{theorem}\label{theo-main-tree}
Let $T_{\ell}^{\degree}$ be a full $\degree$-ary tree of depth $\ell$.
Let $n$ be the smallest positive integer such that there exist 
$\degree+1$ \orderlessBases{} $S_1, \ldots, S_{\degree+1}$ in $T_{n}^{\degree}$ 
satisfying that $\forall_{i \neq j \in \{1,\ldots,{\degree}+1\}} 
\multiplicity{\msrepr_{T_{n}^{\degree}}(w |S_i)}{\ecc_{T_{n}^{\degree}}(w)} 
\neq \multiplicity{\msrepr_{T_{n}^{\degree}}(w|S_j)}{\ecc_{T_{n}^{\degree}}(w)}$,  
where $w$ is the root of $T_{n}^{\degree}$. Then, for every $\ell \geq n$, the
\orderlessMetricDimension{} of $T^{\degree}_{\ell}$ 
is given by $\degree^{\ell-n} \times \dimms(T^{\degree}_{n})$. 
\end{theorem}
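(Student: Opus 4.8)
The plan is to prove the theorem in two directions: first an upper bound $\dimms(T^{\degree}_{\ell}) \le \degree^{\ell-n} \times \dimms(T^{\degree}_{n})$ by exhibiting an explicit outer multiset resolving set, and then a matching lower bound $\dimms(T^{\degree}_{\ell}) \ge \degree^{\ell-n} \times \dimms(T^{\degree}_{n})$ by a structural argument showing every outer multiset resolving set of $T^{\degree}_{\ell}$ decomposes into resolving sets of the depth-$n$ subtrees. The key structural observation is that $T^{\degree}_{\ell}$ contains exactly $\degree^{\ell-n}$ vertex-disjoint copies of $T^{\degree}_{n}$ hanging at the vertices of depth $\ell-n$, and that Lemma~\ref{theo-recursive-basis1}, applied $\ell - n$ times, lets us glue bases of these copies together.

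First I would handle the upper bound. Consider the $\degree^{\ell-n}$ vertices at depth $\ell - n$ in $T^{\degree}_{\ell}$; the subtree rooted at each is a copy of $T^{\degree}_{n}$. By the hypothesis on $n$, there exist $\degree+1$ outer multiset bases $S_1,\ldots,S_{\degree+1}$ of $T^{\degree}_{n}$ whose root-eccentricity multiplicities are pairwise distinct — in particular, for any set of $\degree$ of these copies sharing a common parent, we can assign to them $\degree$ of the bases $S_1,\ldots,S_{\degree}$, whose multiplicities are pairwise distinct, so Lemma~\ref{theo-recursive-basis1} applies and the union is an outer multiset basis of the depth-$(n+1)$ subtree. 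The point of having $\degree+1$ bases available (rather than just $\degree$) is that the resulting glued base of the depth-$(n+1)$ tree will itself have some root-eccentricity multiplicity, and to iterate the gluing one level higher we again need $\degree+1$ pairwise-distinct values among the $\degree$-tuples we produce. I would argue — this is the crux of the induction — that from $\degree+1$ pairwise-distinct ``signatures'' at level $n$ one can produce $\degree+1$ pairwise-distinct signatures at level $n+1$ (e.g. by choosing which of the $\degree+1$ bases to omit, or by a counting argument on the sums of $\degree$-subsets), so the hypothesis propagates upward. Granting this, induction on $\ell$ gives an outer multiset resolving set of $T^{\degree}_{\ell}$ of size $\degree^{\ell-n}\cdot\dimms(T^{\degree}_{n})$, since at each of the $\degree^{\ell-n}$ leaves of the ``contracted'' tree we place a base of size $\dimms(T^{\degree}_{n})$ and the gluing adds nothing.

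For the lower bound I would invoke the minimality part of Lemma~\ref{theo-recursive-basis1}: any outer multiset resolving set $S'$ of $T^{\degree}_{\ell}$, when intersected with the vertex set of any first-level branch $T_i$, yields an outer multiset resolving set of $T_i$. Applying this $\ell - n$ times down the tree, $S'$ restricted to each of the $\degree^{\ell-n}$ depth-$n$ copies is an outer multiset resolving set of that copy, hence has size at least $\dimms(T^{\degree}_{n})$; since these copies are vertex-disjoint, $|S'| \ge \degree^{\ell-n}\cdot\dimms(T^{\degree}_{n})$. Combining the two bounds closes the proof.

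The main obstacle I anticipate is the propagation of the hypothesis: showing that the existence of $\degree+1$ bases with pairwise-distinct root-eccentricity multiplicities at depth $n$ forces the same at every depth $\ell \ge n$. The gluing in Lemma~\ref{theo-recursive-basis1} only guarantees one basis of the larger tree per valid assignment, so I must carefully track how many distinct root-eccentricity multiplicities the glued bases realise and verify this count never drops below $\degree+1$ — this likely requires a small combinatorial lemma (isolated here, proved before this theorem) about how the multiplicity of the maximum distance in the root's representation behaves under the join operation, namely that it equals a sum of $\degree-1$ of the child multiplicities plus possibly a contribution from the root's own base membership, and that the number of attainable such sums stays large. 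Everything else is bookkeeping with Lemmas~\ref{theo-tree-main1} and~\ref{theo-recursive-basis1}.
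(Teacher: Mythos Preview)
Your proposal is correct and follows essentially the same route as the paper's proof: induction on $\ell$, with the inductive hypothesis carrying both the dimension formula and the existence of $\degree+1$ bases with pairwise distinct root-eccentricity multiplicities; Lemma~\ref{theo-recursive-basis1} supplies both the gluing (upper bound) and the restriction-to-branches argument (lower bound). The ``omit one of the $\degree+1$ bases'' idea you float is exactly what the paper does to build the $\degree+1$ new bases $R_i$ at level $\ell+1$.

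One remark: the ``small combinatorial lemma'' you anticipate as an obstacle is in fact immediate, and the paper dispatches it in one line. The root-eccentricity multiplicity of the glued base $R_i$ is exactly $\sum_{j\ne i}\multiplicity{\msrepr(w|S_j)}{\ecc(w)}$ (a sum of $\degree$ terms, not $\degree-1$, and with no extra root contribution since the new root is never in $R_i$); hence $\multiplicity{\msrepr(w'|R_i)}{\ecc(w')}-\multiplicity{\msrepr(w'|R_j)}{\ecc(w')}$ equals the difference of the two omitted multiplicities, which is nonzero by the inductive hypothesis. So the propagation step needs no separate lemma.
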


\begin{proof}
We proceed by induction.

\noindent
\emph{Hypothesis. } For some $\ell \geq n$, the following two conditions hold: 

\begin{enumerate}
	\item \label{cond1} There exists $\degree+1$ \orderlessBases{} $S_1,
\ldots,
S_{\degree+1}$ in $T_{\ell}^{\degree}$ satisfying that $\forall_{i \neq j \in
\{1, \ldots, {\degree}+1\}} \multiplicity{\msrepr_{T_{\ell}^{\degree}}(w |
S_i)}{\ecc_{T_{\ell}^{\degree}}(w)} \neq
\multiplicity{\msrepr_{T_{\ell}^{\degree}}(w
| S_j)}{\ecc_{T_{\ell}^{\degree}}(w)}$
	\item The
\orderlessMetricDimension{} of $T^{\degree}_{\ell}$ is given by
$\degree^{\ell-n} \times
\dimms(T^{\degree}_{n})$.
\end{enumerate}

Clearly, these two conditions hold for $\ell = n$ (base case). The remainder of
this proof will be dedicated to finding $\degree+1$
\orderlessBases{} $R_1,\ldots, R_{\degree+1}$ of $T_{\ell+1}^{\degree}$ that satisfy
condition~(\ref{cond1}). The second condition will follow straightforwardly 
from the size of the bases $R_1,
\ldots, R_{\degree+1}$.

Let $w'$ be the root of $T^{\degree}_{\ell+1}$ and $w$ the root of
$T^{\degree}_{\ell}$. Let
$w_1, \ldots, w_{\degree}$ be the children vertices of $w'$ in
$T^{\degree}_{\ell+1}$. For each sub-branch $T_{w_{k}}$ of
$T^{\degree}_{\ell+1}$, with $k \in \{1,\ldots,\degree\}$, 
let $\iso_k$ be an isomorphism from $T_\ell^{\degree}$ to
$T_{w_{k}}$. It follows that
$\hat{S}_k = \{\iso_k(u) | u \in S_k \}$ is an
\orderlessBase{} of $T_{w_{k}}$, for every $k \in \{1, \ldots, \degree +1\}$.
Moreover, given that $\forall_{k \in \{1, \ldots, \degree
+1\}} \msrepr_{T_{w_k}}(w_k
| \hat{S}_k) = \msrepr_{T^{\degree}_{\ell}}(w
| S_k)$, we conclude that
\begin{align*}
& \forall_{i \neq j \in \{1, \ldots,
\degree+1\}} \multiplicity{\msrepr_{T_{w_i}}(w_i |
\hat{S}_i)}{\ecc_{T_{w_i}}(w_i)} \neq
\multiplicity{\msrepr_{T_{w_j}}(w_j
| \hat{S}_j)}{\ecc_{T_{w_j}}(w_j)}
\end{align*}

By Theorem~\ref{theo-recursive-basis1}, we obtain that, for every $i \in \{1,
\ldots, \degree+1\}$, the set $R_i = \bigcup_{j \in \{1, \ldots,
\degree+1\}\setminus \{i\}} \hat{S}_j$
is an \orderlessBase{} of $T^{\degree}_{\ell+1}$.
Moreover, for
every $i \in \{1, \ldots, \degree+1\}$, the following holds
\begin{align*}
& \multiplicity{\msrepr_{T_{\ell+1}^{\degree}}(w' |
R_i)}{\ecc_{T_{\ell+1}^{\degree}}(w')} = \sum_{j \in \{1, \ldots,
\degree+1\} \setminus \{i\}} \multiplicity{\msrepr_{T_{w_j}}(w_j |
\hat{S}_i)}{\ecc_{T_{w_j}}(w_j)}
\end{align*}
From the equation above we obtain that for every $i, j \in \{1, \ldots,
\degree+1\}$,
\begin{align*}
& \multiplicity{\msrepr_{T_{\ell+1}^{\degree}}(w |
R_i)}{\ecc_{T_{\ell+1}^{\degree}}(w)} -
\multiplicity{\msrepr_{T_{\ell+1}^{\degree}}(w |
R_j)}{\ecc_{T_{\ell+1}^{\degree}}(w)} = \\
& \quad \quad \multiplicity{\msrepr_{T_{w_j}}(w_j |
\hat{S}_j)}{\ecc_{T_{w_j}}(w_j)} - \multiplicity{\msrepr_{T_{w_i}}(w_i |
\hat{S}_i)}{\ecc_{T_{w_i}}(w_i)}
\end{align*}
Recall that $\forall_{k \in \{1, \ldots, \degree
+1\}} \msrepr_{T_{w_k}}(w_k
| \hat{S}_k) = \msrepr_{T^{\degree}_{\ell}}(w
| S_k)$, which means that $i \neq j \implies
\multiplicity{\msrepr_{T_{w_j}}(w_j |
\hat{S}_j)}{\ecc_{T_{w_j}}(w_j)} \neq \multiplicity{\msrepr_{T_{w_i}}(w_i |
\hat{S}_i)}{\ecc_{T_{w_i}}(w_i)}$. Therefore, we conclude that
$T_{\ell+1}^{\degree}$ and $R_1, \ldots,
R_{\degree+1}$ satisfy the first condition of the induction hypothesis, 
\emph{i.e.}
\[
\forall_{i \neq j \in \{1,
\ldots,
{\degree}+1\}} \multiplicity{\msrepr_{T_{\ell+1}^{\degree}}(w |
R_i)}{\ecc_{T_{\ell+1}^{\degree}}(w)} \neq
\multiplicity{\msrepr_{T_{\ell+1}^{\degree}}(w
| R_j)}{\ecc_{T_{\ell+1}^{\degree}}(w)}
\]
Finally, observe that $|R_1| = |\hat{S}_2| \times \cdots\times |\hat{S}_{\degree+1}|
= \degree \times \dimms(T^{\degree}_{\ell})$. The second condition of the induction
hypothesis states that
$\dimms(T^{\degree}_{\ell}) = \degree^{\ell-n} \times \dimms(T^{\degree}_{n})$,
which gives that $\dimms(T^{\degree}_{\ell+1}) =
\degree \times \dimms(T^{\degree}_{\ell}) = \degree^{\ell+1-n} \times
\dimms(T^{\degree}_{n})$.
\end{proof}

We end this section by addressing the problem of finding the smallest $n$ such
that $T_{n}^{\degree}$ contains $\degree+1$ \orderlessBases{} $S_1,
\ldots, S_{\degree+1}$ satisfying the premises of Theorem~\ref{theo-main-tree}.
We do so by developing a computer program\footnote{The computer program can be
found at \url{https://github.com/rolandotr/graph}.} that calculates such number
via exhaustive search. The 
pseudocode for this 
computer program
can be found in 
Algorithm~\ref{alg-main}. 
It reduces the 
search space by bounding the size of an \orderlessBase{} with the help of 
Lemma~\ref{theo-recursive-basis1} (see Step~\ref{step-bound} of 
Algorithm~\ref{alg-main}). That said, we cannot guarantee termination of 
Algorithm~\ref{alg-main}, essentially for two reasons. First, the computational 
complexity 
of each iteration of the algorithm is exponential on the size of 
$T_n^{\degree}$ while, at the same time, the size of $T_n^{\degree}$ 
exponentially increases with $n$. Second, there is no theoretical guarantees 
that such an $n$ can be found for every~$\degree$.

\begin{algorithm}\label{alg-main}
\caption{Given a natural number $\degree$, finds the smallest $n$ 
such that the full $\degree$-ary tree of depth $n$ satisfies 
the premises of Theorem~\ref{theo-main-tree}.} 
\label{alg-unif-distr-fps}
\begin{algorithmic}[1]
\State Let $n = 0$ and $T_n^{\degree}$ a full $\degree$-tree of depth $n$ 
rooted in $w$
\State $min = 1$ \Comment{Lower bound on the cardinality of a basis in 
$T_1^{\degree}$}
\State $max = \degree - 1$ \Comment{Upper 
bound on the cardinality of a basis in $T_1^{\degree}$}
\Repeat\label{st-repeat}
	\For{$i = min$ \textbf{to} $max$} \Comment{Each of these iterations can be 
	ran in parallel}
		\State Let $B$ be an empty set
		\ForAll {$S \subseteq V(T_n^{\degree})$ s.t. $|S| = i$}
			\If {$S$ is a resolving set}
				\If {$\forall_{S' \in B} 
				\multiplicity{\msrepr_{T_{n}^{\degree}}(w |
S')}{\ecc_{T_{n}^{\degree}}(w)} \neq \multiplicity{\msrepr_{T_{n}^{\degree}}(w
|
S)}{\ecc_{T_{n}^{\degree}}(w)}$}
					\State $B = B \cup \{S\}$
				\EndIf
			\EndIf
		\EndFor
		\If {$B \neq \emptyset $}	
			\State \textbf{break} \Comment{The \weak{} multiset dimension of 
			$T_n^{\degree}$ has 
			been found}
		\EndIf
	\EndFor
	\State \label{step-bound} $min = \dimms(T_n^{\degree}) \times \degree$ 
	\Comment{See 
	Lemma~\ref{theo-recursive-basis1}}
	\State $max = min + \degree-1$ \Comment{This is the trivial upper bound}
	\State $n = n + 1$
\Until{$|B| \geq \degree + 1$}\label{st-until} 
\State \Return $n$
\end{algorithmic}
\end{algorithm} 

Despite the exponential computational complexity of Algorithm~\ref{alg-main}, 
it terminates for $\degree = 2$. In this case, the smallest $n$ satisfying the 
premises of
Theorem~\ref{theo-main-tree} is $n = 4$. The three \orderlessBases{} $S_1$,
$S_2$ and $S_3$ of $T_4^2$ are illustrated in
Table~\ref{table-bases} below\footnote{Our program took
about $3.25$ hours in a
DELL computer with
processor i7-7600U and installed memory 16GB to find the result shown in
Table~\ref{table-bases}. }. We refer
the interested reader to Appendix~\ref{appendix} for a visual
representation of the bases shown in Table~\ref{table-bases}. The main
corollary of this result is the following.
\begin{corollary}
The \orderlessMetricDimension{} of a full $2$-ary tree $T_{\ell}^2$ of depth
$\ell$ is:
\[
\dimms(T_\ell^2) =
\begin{cases}
    1,& \text{if } \ell = 1\\
    3,& \text{if } \ell = 2\\
    6,& \text{if } \ell = 3\\
    13,& \text{if } \ell = 4\\
    2^{\ell-4} \times 13,              & \text{otherwise}
\end{cases}
\]
\end{corollary}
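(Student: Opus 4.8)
The plan is to derive the corollary as a direct consequence of Theorem~\ref{theo-main-tree} together with the computational result that $n=4$ works for $\degree=2$. First I would handle the small cases $\ell \in \{1,2,3\}$ by direct computation or by appeal to earlier results: for $\ell=1$, $T_1^2$ is the path $P_3$, so $\dimms(T_1^2)=1$ by Remark~\ref{rm-paths}; for $\ell=2$ and $\ell=3$, I would exhibit an explicit \orderlessBase{} of size $3$ and $6$ respectively (these are the trees underlying the bases in Table~\ref{table-bases}, restricted to subtrees), and show no smaller set works — the lower bounds can be argued via Corollary~\ref{cor-twin-equiv-classes}, since the sibling leaves form twin classes, forcing all but one leaf per sibling group into any \weak{} \orderlessResolving{}, and via Lemma~\ref{theo-recursive-basis1}'s size constraint. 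Alternatively, these three values are simply outputs of Algorithm~\ref{alg-main} and can be cited as such.

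Next I would establish the case $\ell=4$, i.e. $\dimms(T_4^2)=13$. The upper bound $\dimms(T_4^2)\le 13$ follows because Table~\ref{table-bases} exhibits \orderlessBases{} $S_1,S_2,S_3$ of $T_4^2$ of cardinality $13$; one checks (or cites the verified computer output) that each $S_i$ is indeed an outer multiset resolving set. The matching lower bound $\dimms(T_4^2)\ge 13$ is part of what the exhaustive search in Algorithm~\ref{alg-main} certifies: no set of size $12$ or smaller is an outer multiset resolving set of $T_4^2$. Since $n=4$ is the value returned by the algorithm for $\degree=2$, both the value $\dimms(T_4^2)=13$ and the existence of the three pairwise-distinguishing bases are simultaneously established.

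Finally, for $\ell \ge 5$, I would invoke Theorem~\ref{theo-main-tree} with $\degree=2$ and $n=4$: since $T_4^2$ contains $\degree+1=3$ \orderlessBases{} $S_1,S_2,S_3$ whose multiplicities of $\ecc_{T_4^2}(w)$ at the root are pairwise distinct, the theorem gives, for every $\ell\ge 4$,
\[
\dimms(T_\ell^2) = 2^{\ell-4}\times \dimms(T_4^2) = 2^{\ell-4}\times 13\text{.}
\]
In particular this recovers $\dimms(T_4^2)=13$ as the $\ell=4$ instance, and yields the stated closed form for all $\ell\ge 5$. Assembling the piecewise values from the four small cases and this general formula completes the proof.

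I expect the main obstacle to be the verification of the $\ell=4$ case, specifically the lower bound $\dimms(T_4^2)\ge 13$ and the claim that the three exhibited sets are genuine outer multiset resolving sets with pairwise-distinct root multiplicities — these rest on the exhaustive search of Algorithm~\ref{alg-main} rather than on a short human-checkable argument, so the honest thing is to present them as machine-certified (with the program and Table~\ref{table-bases}/Appendix~\ref{appendix} as the record) and note that Lemma~\ref{theo-tree-main1} reduces the resolving-set check to comparing multiset representations only among vertices at equal depth from the root, which is what makes the search feasible.
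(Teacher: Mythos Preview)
Your proposal is correct and follows essentially the same approach as the paper: the small cases $\ell\le 4$ are established by the exhaustive computer search (Algorithm~\ref{alg-main}), and the case $\ell\ge 5$ follows from Theorem~\ref{theo-main-tree} with $\degree=2$ and $n=4$ using the three bases in Table~\ref{table-bases}. One small caveat: the alternative human-checkable lower bounds you sketch for $\ell=2,3$ via Corollary~\ref{cor-twin-equiv-classes} only yield $2$ and $4$ respectively (not $3$ and $6$), so as you rightly note, the honest route is to cite the machine verification for those values as the paper does.
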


\begin{proof}
The first four cases are calculated by an exhaustive search using a computer
program that can be found at \url{https://github.com/rolandotr/graph}. The last
case follows from Theorem~\ref{theo-main-tree}.
\end{proof}

\begin{table}
\caption{Three \orderlessBases{} of $T_4^2$ satisfying the premises of
Theorem~\ref{theo-main-tree}. Vertices of $T_4^2$ have been labelled by using
a breadth-first ascending order, starting by labelling the root node with
$1$ and finishing with the label $2^{n+1}-1$.
}\label{table-bases}
\begin{center}
\begin{tabular}{r | l}
    \hline
    \multirow{5}{*}{} & $S_1 = \{22, 24, 14, 25, 26, 16, 28, 18, 2, 8, 30,
    20, 21\}$\\
	$T_4^2$ & $S_2 =\{22, 12, 24, 14, 26, 16, 28, 18, 6, 8, 30, 20, 21\}$\\
	& $S_3 = \{22, 24, 14, 25, 26, 16, 17, 28, 18, 8, 30, 20, 21\}$\\
    \hline

\end{tabular}
\end{center}
\label{tab:multicol}
\end{table}

It is worth remarking that Algorithm~\ref{alg-main} can be paralellised  and 
hence benefit from a computer cluster. Running the algorithm in a high 
performance computing facility is thus part of future work, which may lead to 
termination of Algorithm~\ref{alg-main} for values of $\degree$ higher than 
$2$.

\section{Conclusions}

In this paper we have addressed the problem of uniquely characterising
vertices in a graph by means of their multiset metric representations.
We have generalised the traditional notion of resolvability in such a way
that the new formulation allows for different structural characterisations
of vertices, including as particular cases the ones previously proposed
in the literature. We have pointed out a fundamental limitation
affecting previously proposed resolvability parameters
based on the multiset representation,
and have introduced a new notion of resolvability, the 
\orderlessMetricDimension,
which effectively addresses this limitation. Additionally, we have conducted
a study of the new parameter, where we have analysed its general behaviour,
determined its exact value for several graph families,
and proven the NP-hardness of its computation, while providing
an algorithm that efficiently handles some particular cases. 

\vspace*{.2cm}
\noindent \textbf{Acknowledgements:} The work reported in this paper 
was partially funded by Luxembourg's Fonds National de la Recherche (FNR), 
via grants C15/IS/10428112 (DIST) and C17/IS/11685812 (PrivDA). 


\newpage
\section*{Appendix}\label{appendix}

Here, the reader can find graphical representations for different 
outer multiset bases in a full $2$-ary tree of depth 
$4$. In the figures, a basis is formed by the red-coloured vertices.  

\begin{figure}[h]
	\centering
\begin{tikzpicture}[scale=0.8]
\tikzset{edge from parent/.append style={very thick}}
\Tree [.1 
		[.\textcolor{red}{2} 
			[.4 
				[.\textcolor{red}{8} 
					[.\textcolor{red}{16} ]
					[.17 ]
				] 
				[.9 
					[.\textcolor{red}{18} ]
					[.19 ]
				] 
            ] 
            [.5 
				[.10 
					[.\textcolor{red}{20} ]
					[.\textcolor{red}{21} ]
				] 
				[.11 
					[.\textcolor{red}{22} ]
					[.23 ]
				] 
			] 
		]
		[.3
			[.6 
				[.12 
					[.\textcolor{red}{24} ]
					[.\textcolor{red}{25} ]
				] 
				[.13 
					[.\textcolor{red}{26} ]
					[.27 ]
				] 
            ] 
            [.7 
				[.\textcolor{red}{14} 
					[.\textcolor{red}{28} ]
					[.29 ]
				] 
				[.15 
					[.\textcolor{red}{30} ]
					[.31 ]
				] 
			] 
		]
	 ]
\end{tikzpicture}
\caption{The multiset 
representation 
of the root vertex with respect to the set of red-coloured vertices is $\{1, 2^2, 
4^{10}\}$.}\label{fig-base1}
\end{figure}

\begin{figure}[h]
	\centering
\begin{tikzpicture}[scale=0.8]
\tikzset{edge from parent/.append style={very thick}}
\Tree [.1 
		[.2
			[.4 
				[.\textcolor{red}{8} 
					[.\textcolor{red}{16} ]
					[.17 ]
				] 
				[.9 
					[.\textcolor{red}{18} ]
					[.19 ]
				] 
            ] 
            [.5 
				[.10 
					[.\textcolor{red}{20} ]
					[.\textcolor{red}{21} ]
				] 
				[.11 
					[.\textcolor{red}{22} ]
					[.23 ]
				] 
			] 
		]
		[.3
			[.\textcolor{red}{6} 
				[.\textcolor{red}{12} 
					[.\textcolor{red}{24} ]
					[.25 ]
				] 
				[.13 
					[.\textcolor{red}{26} ]
					[.27 ]
				] 
            ] 
            [.7 
				[.\textcolor{red}{14} 
					[.\textcolor{red}{28} ]
					[.29 ]
				] 
				[.15 
					[.\textcolor{red}{30} ]
					[.31 ]
				] 
			] 
		]
	 ]
\end{tikzpicture}
\caption{The multiset 
representation 
of the root vertex with respect to the set of red-coloured vertices is $\{2, 3^3, 
4^{9}\}$.}\label{fig-base2}
\end{figure}

\begin{figure}[h]
	\centering
\begin{tikzpicture}[scale=0.8]
\tikzset{edge from parent/.append style={very thick}}
\Tree [.1 
		[.2
			[.4
				[.\textcolor{red}{8} 
					[.\textcolor{red}{16} ]
					[.\textcolor{red}{17} ]
				] 
				[.9 
					[.\textcolor{red}{18} ]
					[.19 ]
				] 
            ] 
            [.5 
				[.10
					[.\textcolor{red}{20} ]
					[.\textcolor{red}{21} ]
				] 
				[.11 
					[.\textcolor{red}{22} ]
					[.23 ]
				] 
			] 
		]
		[.3
			[.6 
				[.12 
					[.\textcolor{red}{24} ]
					[.\textcolor{red}{25} ]
				] 
				[.13 
					[.\textcolor{red}{26} ]
					[.27 ]
				] 
            ] 
            [.7 
				[.\textcolor{red}{14} 
					[.\textcolor{red}{28} ]
					[.29 ]
				] 
				[.15 
					[.\textcolor{red}{30} ]
					[.31 ]
				] 
			] 
		]
	 ]
\end{tikzpicture}
\caption{The multiset 
representation 
of the root vertex with respect to the set of red-coloured vertices is $\{3^2, 
4^{11}\}$. 
\label{fig-base3}}
\end{figure}

\end{document}